\documentclass[a4paper,12pt]{article}
\usepackage[T1]{fontenc}
\usepackage{mathrsfs}
\usepackage{graphicx}
\usepackage{enumerate}
\usepackage{multicol}
\usepackage{color}
\usepackage{amsmath,amssymb}
\usepackage{amsthm}
\usepackage{textcomp}
\usepackage{mathabx}

\usepackage{hyperref}

\usepackage{times}
\usepackage[varg]{txfonts}

\usepackage[top=1in, bottom=1in, left=0.75in, right=0.75in]{geometry}

\theoremstyle{plain}
\newtheorem{thm}{Theorem}
\newtheorem{lem}{Lemma}
\newtheorem{cor}{Corollary}
\newtheorem{prop}{Proposition}

\theoremstyle{definition}
\newtheorem{dfn}{Definition}
\newtheorem{ex}{Example}

\theoremstyle{remark}
\newtheorem{rem}{Remark}

\newtheorem*{ackn}{Acknowledgment}
\newcommand{\ph}{\varphi}

\newcommand{\diff}[2]{\dfrac{\mathrm{d}^{#1}}{\mathrm{d}{#2}^{#1}}}
\newcommand{\supp}{\mathrm{supp}\,}

\title{Second-order derivations of function spaces --- a characterization of  second-order differential operators}
\author{Włodzimierz Fechner and Eszter Gselmann}

\begin{document}

\maketitle

\begin{abstract}
Let $\Omega \subset \mathbb{R}$ be a nonempty and open set, then for all $f, g, h\in \mathscr{C}^{2}(\Omega)$ we have 
\begin{multline*}
 \diff{2}{x}(f\cdot g\cdot h) 
 -f\diff{2}{x}(g\cdot h)-g\diff{2}{x}(f\cdot h)-h\diff{2}{x}(f\cdot g)
 \\
 + f\cdot g\diff{2}{x}h+f\cdot h\diff{2}{x}g+g\cdot h\diff{2}{x}f=0
\end{multline*}
The aim of this paper is to consider the corresponding operator equation 
\[
 D(f\cdot g \cdot h) - fD(g\cdot h) - gD(f\cdot h) - hD(f \cdot g) + f\cdot g D(h) + f\cdot h D(g) +g\cdot h D(f) =0
\]
for operators $D\colon \mathscr{C}^{k}(\Omega)\to \mathscr{C}(\Omega)$, where $k$ is a given nonnegative integer and the above identity is supposed to hold for all $f, g, h \in  \mathscr{C}^{k}(\Omega)$. We show that besides the operators of first and second derivative, there are more solutions to this equation, and we characterize all solutions. Some special cases characterizing differential operators are also studied.
\end{abstract}

\section{Introduction}

Let $\Omega \subset \mathbb{R}$ be a nonempty and open set and $k$ be a nonnegative integer. Let us consider the function space 
\[
 \mathscr{C}^{k}(\Omega)= 
 \left\{  f\colon \Omega \to \mathbb{R}\, \vert \, f \text{ is } k \text{ times continuously differentiable}\right\}. 
\]
For $k=0$, instead of $\mathscr{C}^{0}(\Omega)$ we simply write $\mathscr{C}(\Omega)$ for the linear spaces of all continuous functions $f\colon \Omega\to \mathbb{R}$.

A fundamental property of the first-order derivative is the Leibniz rule, i.e. 
\[
\diff{}{x}(f\cdot g) = f \cdot \diff{}{x} g+ \diff{}{x}f \cdot g 
\qquad 
\left(f, g\in \mathscr{C}^{1}(\Omega)\right). 
\]

H.~König and V.~Milman in \cite{KonMil11}, addressed the question of to what extent the Leibniz rule characterizes the derivative. Their result is  the theorem below, cf.~also \cite[Theorem 3.1]{KonMil18} and \cite{GolSem96}. 

\begin{thm}[König-Milman]
 Let $\Omega\subset \mathbb{R}$ be a nonempty and open set and $k$ be a nonnegative integer. Suppose that the operator $T\colon \mathscr{C}^k(\Omega)\to \mathscr{C}(\Omega)$ satisfies the Leibniz rule, i.e., 
 \begin{equation}\label{LR}
  T(f\cdot g)= f\cdot T(g)+T(f)\cdot g 
  \qquad 
  \left(f, g\in \mathscr{C}^k(\Omega)\right). 
 \end{equation}
Then there exist functions $c, d\in \mathscr{C}^k(\Omega)$ such that for all $f\in \mathscr{C}(\Omega)$ and $x\in \Omega$ 
\[
 T(f)(x)= c(x)\cdot f(x) \cdot \ln \left(\left|f(x)\right|\right)+ d(x) \cdot f'(x). 
\]
For $k=0$ we necessarily have $d=0$. 
Conversely, any such map $T$ satisfies \eqref{LR}. 
\end{thm}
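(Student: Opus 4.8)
The plan is to first dispose of the converse by direct computation and then concentrate on the forward implication, whose real content is a rigidity statement. For the converse, the term $f \mapsto d\cdot f'$ obeys \eqref{LR} by the classical product rule, while for $f \mapsto c\cdot f\ln|f|$ one uses $\ln|f g| = \ln|f| + \ln|g|$ where $f g \neq 0$, which gives $f g\ln|f g| = g\,(f\ln|f|) + f\,(g\ln|g|)$; at zeros of $f$ or $g$ the identity persists because $t\mapsto t\ln|t|$ extends continuously by $0$ at the origin. Summing the two contributions yields \eqref{LR}. The governing idea for the forward direction is to turn the multiplicative Leibniz equation into an additive one.

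First I would record that $f=g=\mathbf 1$ in \eqref{LR} forces $T(\mathbf 1)=0$. The decisive step is an exponential substitution: writing a strictly positive $f$ as $f=e^{u}$ with $u\in\mathscr C^{k}(\Omega)$ and setting $S(u):=e^{-u}T(e^{u})$, equation \eqref{LR} transforms into the additive Cauchy equation $S(u+v)=S(u)+S(v)$ for all $u,v\in\mathscr C^{k}(\Omega)$. Hence $S\colon\mathscr C^{k}(\Omega)\to\mathscr C(\Omega)$ is additive, therefore $\mathbb Q$-homogeneous, and the whole problem is reduced to classifying such additive $S$ and transporting the answer back through $u=\ln f$ and $u'=f'/f$.

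The heart of the argument is to show that $S$ is a local operator of order at most one. A multiplicative cutoff argument gives a support lemma for $T$: if $f$ vanishes on a neighbourhood of $x_{0}$, then writing $f=\eta f$ with $\eta$ smooth, $\eta\equiv 0$ near $x_{0}$ and $\eta\equiv 1$ on $\supp f$, the rule \eqref{LR} gives $T(f)(x_0)=\eta(x_0)T(f)(x_0)+T(\eta)(x_0)f(x_0)=0$. Transferring this, together with the additivity of $S$, to the exponential picture, I would prove that $S(u)(x_{0})$ depends only on the finite jet $(u(x_{0}),u'(x_{0}),\dots,u^{(k)}(x_{0}))$. Additivity in $u$ then forces additivity of this dependence in the jet variables and, after securing enough regularity, linearity, so that $S(u)(x_{0})=\sum_{j=0}^{k}a_{j}(x_{0})\,u^{(j)}(x_{0})$. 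Feeding this back into \eqref{LR} kills every coefficient with $j\ge 2$, since $f\mapsto f^{(j)}$ produces cross terms incompatible with a derivation (already $(f g)''$ contains the term $2f'g'$). I expect this localization-plus-order-reduction step, and the exclusion of pathological (non-measurable) solutions of the underlying Cauchy equations, to be the main obstacle.

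With $S(u)(x_0)=a_0(x_0)u(x_0)+a_1(x_0)u'(x_0)$ in hand, undoing the substitution via $u=\ln f$ and $u'=f'/f$ gives, for $f>0$,
\[
 T(f)(x_0)=e^{u(x_0)}S(u)(x_0)=a_0(x_0)\,f(x_0)\ln f(x_0)+a_1(x_0)\,f'(x_0),
\]
so one sets $c:=a_0$ and $d:=a_1$. I would then extend the formula to functions of arbitrary sign and across zeros of $f$ by applying the positive case on $\{f>0\}$ and $\{f<0\}$ (using $T(f^2)=2fT(f)$ to handle signs and $|f|$) and invoking continuity of $T(f)$ together with the fact that $t\mapsto t\ln|t|$ and $f'$ are continuous across the zero set. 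Regularity $c,d\in\mathscr C^{k}(\Omega)$ follows by evaluating $T$ on explicit test functions with prescribed $1$-jets, so that $c$ and $d$ are expressed through values of $T$ on smooth inputs. Finally, when $k=0$ the derivative $f'$ is unavailable for general $f\in\mathscr C(\Omega)$, and the same test-function computation forces $d\equiv 0$.
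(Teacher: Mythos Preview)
The paper does not give its own proof of this theorem: it is quoted as a background result of K\"onig and Milman, with a reference to \cite{KonMil11} and \cite[Theorem~3.1]{KonMil18}. So there is no in-paper argument to compare your sketch against. That said, your overall strategy---exponential substitution $S(u)=e^{-u}T(e^{u})$ turning \eqref{LR} into an additive Cauchy equation, followed by a localization/jet-dependence argument---is the standard route and is exactly the template the present paper follows in its proof of Theorem~\ref{thm_main} for the second-order equation.

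There is, however, a genuine gap in your order-reduction step. Once you have $S(u)(x_0)=\sum_{j=0}^{k}a_j(x_0)\,u^{(j)}(x_0)$, the Leibniz rule \eqref{LR} is \emph{automatically} satisfied on strictly positive functions, because additivity of $S$ in $u$ is equivalent to \eqref{LR} for $T$ on $e^{u}$. In particular, $T(f)=a_2\bigl(f''-(f')^{2}/f\bigr)$ \emph{does} satisfy \eqref{LR} on positive $f$; your claim that ``$(fg)''$ contains $2f'g'$, incompatible with a derivation'' applies to the operator $f\mapsto f''$, not to $f\mapsto f\cdot(\ln f)''$, and these are different. What actually forces $a_j=0$ for $j\ge 2$ is the requirement that $T$ be defined on all of $\mathscr{C}^{k}(\Omega)$ with values in $\mathscr{C}(\Omega)$: the expression $f\cdot(\ln|f|)^{(j)}$ carries a singularity of order $1/f^{\,j-1}$ at zeros of $f$ (cf.\ Lemma~\ref{lem_faa}), and no cancellation among the remaining terms can remove it once $j\ge 2$. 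This is precisely the mechanism used in the proof of Theorem~\ref{thm_main} here, and it is the point you should replace your ``cross terms'' argument with.
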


In case of the second-order differential operator, the so-called second-order Leibniz rule, i.e. 
 \begin{equation}\label{idfg}
  \diff{2}{x}(f\cdot g)= \diff{2}{x}f\cdot g+2\diff{}{x}f\cdot \diff{}{x}g+f\cdot \diff{2}{x}g 
  \qquad 
  \left(f, g \in \mathscr{C}^{2}(\Omega)\right)
 \end{equation}
 plays a key role. 
 Motivated by this identity, König--Milman \cite{KonMil18} studied also the corresponding operator identity 
 \[
  T(f\cdot g)= T(f)\cdot g+f\cdot T(g)+2A(f)\cdot A(g) 
  \qquad 
  \left(f, g\in \mathscr{C}^{k}(\Omega)\right)
 \]
for operators $T, A\colon \mathscr{C}^{k}(\Omega)\to \mathscr{C}(\Omega)$ . 
In this case, the precise statement is more complicated. We will need two definitions before we can formulate their result. 

\begin{dfn}
 Let $k$ be a nonnegative integer, $\Omega \subset \mathbb{R}$ be an open set. An operator $A\colon \mathscr{C}^{k}(\Omega)\to \mathscr{C}(\Omega)$ is \emph{non-degenerate} if for each nonvoid open subset $U\subset \Omega$ and all $x\in U$, there exist functions $g_{1}, g_{2}\in \mathscr{C}^{k}(\Omega)$ with supports in $U$ such that the vectors $(g_{i}(x), A g_{i}(x))\in \mathbb{R}^{2}$, $i=1, 2$ are linearly independent in $\mathbb{R}^{2}$. 
\end{dfn}

\begin{dfn}\label{dfn_nontrivial}
 Let $k\geq 2$ be a positive integer and $\Omega\subset \mathbb{R}$ be an open set. We say that the operator $A\colon \mathscr{C}^{k}(\Omega)\to \mathscr{C}(\Omega)$ \emph{depends non-trivially on the derivative} if there exists $x\in \Omega$ and there are functions $f_{1}, f_{2}\in \mathscr{C}^{k}(\Omega)$ such that 
 \[
  f_{1}(x)= f_{2}(x) \quad \text{and} \quad A f_{1}(x)\neq A f_{2}(x)
 \]
holds. 
\end{dfn}

We recall  Theorem 7.1  from \cite{KonMil18} in connection with the second-order Leibniz rule. 

\begin{thm}[König-Milman]\label{Konig-Milman}
 Let $k$ be a nonnegative integer and $\Omega \subset\mathbb{R}$ be a domain (i.e. an open and connected set). Assume that $T, A\colon \mathscr{C}^{k}(\Omega)\to \mathscr{C}(\Omega)$ satisfy 
 \begin{equation}\label{Tfg}
  T(f\cdot g)= T(f)\cdot g+f\cdot T(g)+2A(f)\cdot A(g) 
  \qquad 
  \left(f, g\in \mathscr{C}^{k}(\Omega)\right)
 \end{equation}
and that $A$ is non-degenerate and depends non-trivially on the derivative. 
Then there are continuous functions $a, b, c\in \mathscr{C}(\Omega)$ such that
\[
 \begin{array}{rcl}
  T(f)(x)&=& \frac{1}{2}c(x)^2\cdot  f''(x)+R(f)(x)\\[2mm]
  A(f)(x)&=& c(x)\cdot f'(x)
 \end{array}
\qquad 
\left(f\in \mathscr{C}^{k}(\Omega), x\in \Omega\right), 
\]
where 
\[
 R(f)(x)= b(x)\cdot  f'(x)+a(x) \cdot f(x)\ln \left(|f(x)|\right) 
 \qquad 
 \left(f\in \mathscr{C}^{k}(\Omega)\right). 
\]
If $k=1$, then necessarily $c\equiv 0$. Further, if $k=0$, then necessarily $b\equiv 0$ and $c\equiv 0$.

Conversely, these operators satisfy the second-order Leibniz rule \eqref{TA}. 
\end{thm}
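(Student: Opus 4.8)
\emph{Overview.} The plan is to run the König--Milman machinery in three stages: first show that both $T$ and $A$ are \emph{local} operators, then reduce \eqref{Tfg} to a pointwise identity in the jets of the arguments, and finally solve that identity, invoking the first (Leibniz) König--Milman theorem twice.

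\emph{Localization.} I would begin with the observation that non-degeneracy forces locality of both operators at once. Fix an open $U\subseteq\Omega$, a point $x\in U$, and functions $f,g$ agreeing on $U$. For any $h$ with $\supp h\subseteq U$ one has $f\cdot h=g\cdot h$ on all of $\Omega$, hence $T(fh)=T(gh)$; expanding both sides by \eqref{Tfg} and evaluating at $x$, where $f(x)=g(x)$, gives
\[
 \bigl[T(f)(x)-T(g)(x)\bigr]h(x)+2\bigl[A(f)(x)-A(g)(x)\bigr]A(h)(x)=0.
\]
Choosing $h=g_{1},g_{2}$ as furnished by the non-degeneracy hypothesis produces a $2\times 2$ linear system with invertible coefficient matrix, whence $T(f)(x)=T(g)(x)$ and $A(f)(x)=A(g)(x)$. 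Thus both operators depend only on the germ of their argument at $x$; a continuity/Taylor argument should then upgrade this to dependence on a finite jet, and I expect $A(f)(x)$ to depend only on $(f(x),f'(x))$ and $T(f)(x)$ only on $(f(x),f'(x),f''(x))$.

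\emph{An auxiliary identity for $A$, and its form.} Having localized, I would fix $x$ and read \eqref{Tfg} as an identity in jet-coordinates, which may be prescribed arbitrarily using polynomials multiplied by bump functions. The crucial structural step is to exploit associativity: computing $T(fgh)$ by grouping it as $(fg)h$ and as $f(gh)$ and subtracting, all $T$-terms cancel and one is left with a closed identity for $A$ alone, namely $\bigl[A(fg)-fA(g)\bigr]A(h)=A(f)\bigl[A(gh)-A(g)h\bigr]$. Writing $D(f,g)=A(fg)-A(f)g-fA(g)$ for the derivation-defect of $A$, this simplifies to $D(f,g)A(h)=A(f)D(g,h)$; since $D$ is symmetric, the proportionality forces $D(f,g)(x)=\mu(x)A(f)(x)A(g)(x)$ for a scalar $\mu(x)$, equivalently that $E:=\mu A+\mathrm{ev}_{x}$ is multiplicative at $x$. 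If $\mu(x)\neq 0$, then $E$ is a continuous local multiplicative functional, and the classification of such operators makes $E$, hence $A$, depend on $f(x)$ alone, contradicting the hypothesis that $A$ depends non-trivially on the derivative; so $\mu\equiv 0$ and $A$ is a genuine derivation. The first König--Milman theorem then gives $A(f)(x)=a_{0}(x)f(x)\ln|f(x)|+c(x)f'(x)$, and re-inserting this into \eqref{Tfg} — where the defect $T(fg)-T(f)g-fT(g)=2A(f)A(g)$ must be realizable by the local operator $T$ — eliminates the logarithmic part, whose cross-terms $\ln|f|\ln|g|$ cannot be produced, leaving $A(f)(x)=c(x)f'(x)$.

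\emph{Solving for $T$, and the main obstacle.} With $A$ determined, \eqref{Tfg} becomes an inhomogeneous Leibniz rule whose right-hand side is a multiple of $f'(x)g'(x)$; a suitable multiple of $c^{2}(\cdot)''$, namely the stated $\tfrac12 c^{2}f''$, is a particular solution absorbing the inhomogeneity, so that $R:=T-\tfrac12 c^{2}(\cdot)''$ satisfies the homogeneous Leibniz rule \eqref{LR}, and the first theorem yields $R(f)=bf'+a\,f\ln|f|$. The degenerate conclusions for $k=0,1$ follow since $f''$ (resp.\ $f'$) is then unavailable as an argument. I expect the genuine difficulty to lie not in this algebra but in two analytic points: rigorously passing from germ-dependence to dependence on a \emph{finite} jet, and the branch analysis $\mu\neq 0$, i.e.\ the classification of continuous local multiplicative functionals needed to exclude the value-only solutions. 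These, together with the elimination of the logarithmic term in $A$, are exactly where both hypotheses on $A$ become indispensable, and getting the bookkeeping of the reductions right — rather than any single computation — is the crux.
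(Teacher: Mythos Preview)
The paper does not prove this theorem: it is quoted as Theorem~7.1 of K\"onig--Milman's monograph \cite{KonMil18} and stated without argument, so there is no ``paper's own proof'' against which to compare your attempt.

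As a sketch of the original K\"onig--Milman argument your outline is broadly on target --- localization via non-degeneracy, the associativity trick that isolates a closed identity for $A$, reduction to the first-order Leibniz theorem, and subtraction of a particular solution to handle $T$ --- and you correctly flag where each hypothesis on $A$ is used. Two steps are too quick, however. First, your elimination of the logarithmic piece of $A$ (``cross-terms $\ln|f|\ln|g|$ cannot be produced'') is not right as stated: a term of the form $d(x)\,f(\ln|f|)^{2}$ in $T$ \emph{does} produce precisely $2d\,fg\ln|f|\ln|g|$ in $T(fg)-fT(g)-gT(f)$. The genuine obstruction comes from the mixed terms $2a_{0}c\,f\ln|f|\cdot g'$ in $2A(f)A(g)$, which would require a summand $e(x)\,f'\ln|f|$ in $T$; that expression fails to be continuous at zeros of $f$, and this is what forces $a_{0}c\equiv 0$. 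Second, the hypothesis ``$A$ depends non-trivially on the derivative'' is purely existential (one point $x_{0}$), so your conclusion $\mu\equiv 0$ from ``$\mu(x_{0})=0$'' still needs the connectedness of $\Omega$ together with a continuity/open--closed argument; you identify this branch as the crux but do not indicate how to close it globally.
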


\begin{cor}\label{Cor_KM}
Under the assumptions of the previous theorem, suppose that the operators $T, A\colon \allowbreak \mathscr{C}^{k}(\Omega) \allowbreak \to \mathscr{C}(\Omega)$ are additive. Then $a\equiv 0$ in the above theorem. In other words, there are continuous functions $b, c\in \mathscr{C}(\Omega)$ such that 
\begin{equation*}
 \begin{array}{rcl}
  \label{TA} T(f)(x)&=& \frac{1}{2}c(x)^{2} \cdot  f''(x)+ b(x)\cdot  f'(x) \\[2mm]
   A(f)(x)&=& c(x)\cdot  f'(x)
 \end{array}
\qquad 
\left(f\in \mathscr{C}^{k}(\Omega), x\in \Omega\right). 
\end{equation*}
If $k=1$, then necessarily $c\equiv 0$. Further, if $k=0$, then necessarily $b\equiv 0$ and $c\equiv 0$. 

Conversely, these operators are additive and satisfy the second-order Leibniz rule \eqref{TA}. 
\end{cor}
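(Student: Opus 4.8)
The plan is to take as given everything Theorem~\ref{Konig-Milman} already provides: under the hypotheses of that theorem we know that there are continuous functions $a,b,c$ with
\[
 T(f)(x)=\tfrac12 c(x)^2 f''(x)+b(x)f'(x)+a(x)f(x)\ln\left|f(x)\right|,\qquad A(f)(x)=c(x)f'(x),
\]
so the only task is to show that the additivity hypothesis forces $a\equiv 0$; the statements about $c$ and $b$ for $k=1$ and $k=0$ are then inherited verbatim. First I would observe that $A(f)=c\cdot f'$ is automatically additive --- indeed $\R$-linear --- because differentiation and multiplication by the fixed function $c$ are linear. Hence the additivity of $A$ carries no information, and the entire force of the hypothesis sits in the additivity of $T$.

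Next I would isolate the nonlinear part of $T$. The two summands $\tfrac12 c^2 f''$ and $b f'$ depend linearly on $f$, hence are additive and cancel when one forms $T(f+g)-T(f)-T(g)$. Using the linearity of the first and second derivatives, additivity of $T$ therefore reduces exactly to the requirement
\[
 a(x)\left[(f+g)(x)\ln\left|(f+g)(x)\right|-f(x)\ln\left|f(x)\right|-g(x)\ln\left|g(x)\right|\right]=0
\]
for all $f,g\in\mathscr{C}^{k}(\Omega)$ and all $x\in\Omega$. I would then test this identity with convenient functions: constant functions lie in $\mathscr{C}^{k}(\Omega)$ for every $k\geq 0$, so fixing an arbitrary $x_0\in\Omega$ and choosing $f\equiv g\equiv 1$ yields $a(x_0)\cdot 2\ln 2=0$, whence $a(x_0)=0$. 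As $x_0$ was arbitrary, $a\equiv 0$, which is precisely the assertion.

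For the converse, with $a\equiv 0$ the operators $T(f)=\tfrac12 c^2 f''+b f'$ and $A(f)=c f'$ are $\R$-linear, hence additive, and they satisfy the second-order Leibniz rule \eqref{Tfg} by the converse direction of Theorem~\ref{Konig-Milman} specialized to $a\equiv 0$. I do not expect a genuine obstacle here, since the whole argument is a short computation once the representation of Theorem~\ref{Konig-Milman} is in hand; the only point requiring a moment's care is the choice of test functions, where one must pick values at which $s\mapsto s\ln|s|$ visibly fails to be additive (the value $s=1$ giving $2\ln 2\neq 0$) while keeping all arguments of the logarithm nonzero so that no spurious $\ln 0$ appears.
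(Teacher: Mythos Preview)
Your argument is correct. The paper does not supply a separate proof of this corollary; it is stated immediately after Theorem~\ref{Konig-Milman} as a direct consequence, with the understanding that the only non-additive term in the representation is $a(x)f(x)\ln|f(x)|$. Your proof makes this explicit: you isolate that term and kill it by evaluating at the constant function $1$, which is precisely the intended one-line verification.
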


\medskip

In the present article, we propose another operator equation for the characterization of the second-order Leibniz rule, which in our opinion has some advantages comparing to \eqref{Tfg}.
Easy computation shows that 
\begin{multline}\label{id_second}
 \diff{2}{x}(f\cdot g\cdot h)
 -f\diff{2}{x}(g\cdot h)-g\diff{2}{x}(f\cdot h)-h\diff{2}{x}(f\cdot g)
 \\
 +f\cdot g\diff{2}{x}h+f\cdot h\diff{2}{x}g+g\cdot h\diff{2}{x}f=0
\end{multline}
holds for all $f, g, h\in \mathscr{C}^{2}(\Omega)$. 
 
The identity \eqref{Tfg}, has a disadvantage in some sense compared to identity 
\begin{multline*}
D(f\cdot g \cdot h) - fD(g\cdot h) - gD(f\cdot h) - hD(f \cdot g)
\\
+ f\cdot g D(h) + f\cdot h D(g) +g\cdot h D(f) =0
\qquad
\left(f, g, h\in \mathscr{C}^{k}(\Omega)\right)
\end{multline*}
Namely, the second-order Leibniz rule \eqref{Tfg} includes not only the second-order but also the first-order differential operator. In addition, a significant condition in the above theorem is that the operator $A$ is non-degenerate and depends non-trivially on the derivative. As we will see in the third section, neither these conditions, nor linearity, need not be assumed for the operator $D$ in the case of the latter identity.

\section{Operator relations and their connections}
 
Let $k$ be a fixed nonnegative integer and $\Omega\subset \mathbb{R}$ be a nonempty and open set. In what follows, we will study operators $D\colon \mathscr{C}^{k}(\Omega)\to \mathscr{C}(\Omega)$ that fulfil 
\begin{equation}\label{id_2}
D(f\cdot g \cdot h) - fD(g\cdot h) - gD(f\cdot h) - hD(f \cdot g) + f\cdot g D(h) + f\cdot h D(g) +g\cdot h D(f) =0
\end{equation}
for all $f, g, h\in \mathscr{C}^{k}(\Omega)$. We emphasize that unless written otherwise, the operator $D$ is not assumed to be linear. As we will see \eqref{id_2} turns out to be suitable for characterizing second-order linear differential operators in function spaces. 

Besides equation \eqref{id_2}, the related operator identity 
\begin{equation}\label{id_powers}
 D(f^3) - 3fD(f^2) + 3f^2D(f)=0 
 \qquad 
 \left(f\in \mathscr{C}^{k}(\Omega)\right)
\end{equation}
will also be studied for the operator $D\colon \mathscr{C}^{k}\Omega)\to \mathscr{C}(\Omega)$. Here $f^{i}$ denotes the $i$\textsuperscript{th} power of the function $f$. 
Note that if one substitute in \eqref{id_2} $g=f$ and $h=f$, then \eqref{id_2} reduces to \eqref{id_powers}. Therefore, the latter identity is more general.

\begin{rem}[Leibniz rule $\Rightarrow$ identity \eqref{id_2}]
 Assume that the operator $D\colon \mathscr{C}^{k}(\Omega)\to \mathscr{C}(\Omega)$ fulfills the Leibniz rule, i.e., we have 
 \[
  D(f\cdot g)= D(f)\cdot g+f\cdot D(g) 
  \qquad 
  \left(f, g\in \mathscr{C}^{k}(\Omega)\right). 
 \]
Then $D$ satisfies identity \eqref{id_2}, too. Indeed, by a successive application of the Leibniz rule, we derive that
\begin{align*}
 D(fgh)&= D((gf)\cdot h)= D(fg)\cdot h+fg\cdot D(h) \\
 &= \left[D(f)\cdot g+f\cdot D(g)\right] \cdot h+ fg\cdot D(h)\\
 &= D(f)\cdot gh+D(g)\cdot fh+D(h)\cdot fg
\end{align*}
holds for all $f, g, h\in \mathscr{C}^{k}(\Omega)$. Therefore 
\begin{align*}
 D(f\cdot g \cdot h)& - fD(g\cdot h) - gD(f\cdot h) - hD(f \cdot g) + f\cdot g D(h) + f\cdot h D(g) +g\cdot h D(f) \\
 & = D(f)\cdot gh+D(g)\cdot fh+D(h)\cdot fg \\ 
 &-f \cdot \left[D(g)\cdot h+g\cdot D(h)\right]
 -g \cdot \left[D(f)\cdot h+f\cdot D(h)\right]
 -h \cdot \left[D(f)\cdot g+f\cdot D(g)\right] \\
 & + f\cdot g D(h) + f\cdot h D(g) +g\cdot h D(f) =0
\end{align*}
for all $f, g, h\in \mathscr{C}^{k}(\Omega)$. 
\end{rem}

\begin{rem}[Second-order Leibniz rule $\Rightarrow$ \eqref{id_2}]
 Let $D, A\colon \mathscr{C}^{k}(\Omega)\to \mathscr{C}(\Omega)$ be operators such that the operator $A$ satisfies the Leibniz rule and the pair $(D, A)$ fulfils the second-order Leibniz rule \eqref{Tfg}. Then $D$ fulfils identity \eqref{id_2}.
 
 Indeed, if the pair $(D, A)$ satisfies the second-order Leibniz rule \eqref{Tfg}, then by a successive application, we deduce that 
 \begin{align*}
D(fgh)&= D((fg)\cdot h) \\
&= D(fg)\cdot h+fg\cdot D(h)+2A(fg)\cdot A(h)\\
&= h\cdot \left[D(f)\cdot g+f\cdot D(g)+2A(f)\cdot A(g)\right]+fg\cdot D(h)+2A(fg)\cdot A(h)\\
&= D(f)\cdot gh+D(g)\cdot fh+D(h)\cdot fg+2hA(f)\cdot A(g)+2A(fg)\cdot A(h)
\end{align*}
holds for all $f, g, h\in \mathscr{C}^{k}(\Omega)$. 
From this, we obtain
\begin{multline}\label{eq_rem2}
 D(fgh)-f\cdot D(fg)-g\cdot D(fh)-h\cdot D(fg)+fg \cdot D(h)+fh\cdot D(g)+gh\cdot D(f)
 \\
 =
 2A(h)\cdot \left[A(fg)-f\cdot A(g)-g\cdot A(f)\right]
\end{multline}
for all $f, g, h\in \mathscr{C}^{k}(\Omega)$. Thus if $A$ satisfies the Leibniz rule, then $D$ fulfils identity \eqref{id_2} for all $f, g, h\in \mathscr{C}^{k}(\Omega)$. 

Observe that equation \eqref{eq_rem2} holds without assuming $A$ to fulfil the Leibniz rule. The only thing we used there is that $D$ satisfies the second-order Leibniz rule. Therefore, if the pair $(D, A)$ satisfies the second-order Leibniz rule, then the operator $D$ fulfills identity \eqref{id_2} \emph{if and only if} 
\[
 A(h)(x)\cdot \left[A(fg)(x)-f(x)\cdot A(g)(x)-g(x)\cdot A(f)(x)\right]=0
\]
holds for all $f, g, h\in \mathscr{C}^{k}(\Omega)$ and for all $x\in \Omega$. 
This identity holds obviously when $ A(h)(x)=0$
for all $h\in \mathscr{C}^{k}(\Omega)$ and for all $x\in \Omega$. 
The other possibility is that there exists a function $h\in \mathscr{C}^{k}(\Omega)$ and there is a point $x\in \Omega$ such that $A(h)(x)\neq 0$. As $A(h)\in \mathscr{C}(\Omega)$, there exists an open interval $J\subset \Omega$, containing the point $x$ such that $A(h)\vert_{J}\neq 0$. This however yields that necessarily 
\[
 A(fg)(x)=f(x)\cdot A(g)(x)+g(x)\cdot A(f)(x)
\]
whenever $f, g\in \mathscr{C}^{k}(\Omega)$ and $x\in J$. 

Moreover, the left-hand side of equation \eqref{eq_rem2} is symmetric in $f, g$ and $h$. Thus the right-hand side must also be symmetric in all of its variables. 
So if the pair $(D, A)$ fulfils the second-order Leibniz rule \eqref{Tfg}, then 
\begin{align*}
D(fgh)&-f\cdot D(fg)-g\cdot D(fh)-h\cdot D(fg) +fg \cdot D(h)+fh\cdot D(g)+gh\cdot D(f) \\
&= \frac{2}{3}A(h)\cdot \left[A(fg)-f\cdot A(g)-g\cdot A(f)\right] 
+ \frac{2}{3}A(g)\cdot \left[A(fh)-f\cdot A(h)-h\cdot A(f)\right] \\
&+ \frac{2}{3}A(f)\cdot \left[A(gh)-g\cdot A(h)-h\cdot A(g)\right] 
\end{align*}
holds for all $f, g, h\in \mathscr{C}^{k}(\Omega)$. 
\end{rem}

\begin{rem}
 As our results below show (or it can be checked by a simple calculation), the operator $T$ defined by 
\[
 T(f)= f\ln(|f|)^{2} 
 \qquad 
 \left(f\in \mathscr{C}^{k}(\Omega)\right)
\]
fulfills \eqref{id_2}. Moreover, for the pair 
\[
 T(f)= f\ln(|f|)^{2} \qquad 
 \text{and}
 \qquad 
 A(f)= f\ln(|f|)
 \qquad 
 \left(f\in \mathscr{C}^{k}(\Omega)\right)
\]
we have 
\begin{align*}
 T(fg)& = fg\ln(|fg|)^{2} \\
 &= fg \left( \ln(|f|+ \ln(|g|))\right)^{2}\\
 &= f \cdot g\ln(|g|)^{2}+g\cdot f\ln(|f|)^2+2f\ln(|f|)\cdot g\ln(|g|)\\
 &= f\cdot T(g)+g\cdot T(g)+2A(f)\cdot A(g)
 \end{align*}
 for all $f, g\in \mathscr{C}^{k}(\Omega)$. So the pair $(T, A)$ satisfies the second-order Leibniz rule \eqref{Tfg}. 
At the same time, the pair $(T, A)$ is not included in Theorem \ref{Konig-Milman} because this pair does not satisfy the conditions of the above-mentioned theorem of König and Milman. 
If $U\subset \Omega$ is a nonempty and open set and $x\in U$, then we can choose the functions $g_{1}, g_{2}\in \mathscr{C}^{k}(\Omega)$ so that $g_{1}(x)\neq 0$, $g_{2}(x)\neq 0$ and $|g_{1}(x)|\neq |g_{2}(x)|$ hold. 
In this case however 
\[
 \det \begin{pmatrix}
       g_{1}(x) & g_{2}(x)\\
       g_{1}(x) \ln(|g_{1}(x) |) & g_{2}(x) \ln(|g_{2}(x) |)
      \end{pmatrix}
=
g_{1}(x)g_{2}(x)\left(\ln (|g_{2}(x)|)-\ln(|g_{1}(x) |)\right)\neq 0
\]
holds, i.e., the vectors $(g_{i}(x), A(g_{i})(x))\in \mathbb{R}^{2}$ for $i=1, 2$ are linearly independent. So the operator $A$ is non-degenerate. 

At the same time, if $x\in \Omega$ and $g_{1}, g_{2}\in \mathscr{C}^{k}(\Omega)$ are such that $g_{1}(x)= g_{2}(x)$, then 
\[
 A(g_{1})(x)= g_{1}(x)\ln(|g_{1}(x)|)= g_{2}(x)\ln(|g_{2}(x)|) = A(g_{2})(x).
\]
 Thus the operator $A$ does not fulfill the requirements of Definition \ref{dfn_nontrivial}, since $A$ depends trivially on the derivative. 
\end{rem}

\section{Results}

At first, we show that any solution $D\colon \mathscr{C}^{k}(\Omega)\to \mathscr{C}(\Omega)$ is localized on intervals.

\begin{lem}\label{lem_loc_interval}
 Let $k$ be a fixed nonnegative integer and $\Omega\subset \mathbb{R}$ be a nonempty and open set.
 Suppose that the operator  $D\colon \mathscr{C}^{k}(\Omega)\to \mathscr{C}(\Omega)$ satisfies \eqref{id_2}, then $D(\mathbf{1})=0$ and $D(-\mathbf{1})=0$ hold. Further, $D$ is \emph{localized on intervals}, that is if $J\subset \Omega$ is open and $f_{1}, f_{2}\in \mathscr{C}^{k}(\Omega)$ are functions such that $f_{1}\vert_{J}= f_{2}\vert_{J}$, then $D(f_{1})\vert_{J}= D(f_{2})\vert_{J}$. 
\end{lem}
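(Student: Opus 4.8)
The plan is to treat the two assertions separately: the values on the constants come from two direct substitutions, while the localization rests on a single well-chosen specialization of \eqref{id_2} together with a cut-off function.

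For the constants, I would first put $f=g=h=\mathbf 1$ in \eqref{id_2}. Every pairwise and triple product is again $\mathbf 1$, so all seven terms become multiples of $D(\mathbf 1)$; the signs sum to $+1-1-1-1+1+1+1=1$, leaving $D(\mathbf 1)=0$. Next I put $f=g=h=-\mathbf 1$. Here $g\cdot h=f\cdot h=f\cdot g=\mathbf 1$, so the three middle terms $fD(gh),\,gD(fh),\,hD(fg)$ each contain $D(\mathbf 1)=0$ and drop out, while $D(fgh)=D(-\mathbf 1)$ and the three terms $fgD(h),\,fhD(g),\,ghD(f)$ each equal $D(-\mathbf 1)$; this yields $4D(-\mathbf 1)=0$, hence $D(-\mathbf 1)=0$.

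For localization it suffices to fix $x_0\in J$ and prove $D(f_1)(x_0)=D(f_2)(x_0)$. I would choose a cut-off $\phi\in\mathscr C^{k}(\Omega)$ with $\phi\equiv 1$ on a neighbourhood of $x_0$ and $\{\phi\neq 0\}\subset J$ (a standard smooth bump supported in an interval around $x_0$ whose closure lies in $J$; being $\mathscr C^{\infty}$ it is in particular $\mathscr C^{k}$). Since $f_1=f_2$ on $J$ and $\phi$ is supported in $J$, the products agree \emph{as functions on all of} $\Omega$: $f_1\phi=f_2\phi$ and $f_1\phi^{2}=f_2\phi^{2}$, so $D(f_1\phi)=D(f_2\phi)$ and $D(f_1\phi^{2})=D(f_2\phi^{2})$ everywhere, in particular at $x_0$. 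The crucial step is to specialize \eqref{id_2} with $g=h=\phi$ and $f=f_i$, which gives
\[
 D(f_i\phi^{2})-f_iD(\phi^{2})-2\phi D(f_i\phi)+2f_i\phi D(\phi)+\phi^{2}D(f_i)=0 .
\]
Solving for $\phi^{2}D(f_i)$ and evaluating at $x_0$, where $\phi(x_0)=1$, expresses $D(f_i)(x_0)$ through $D(f_i\phi^{2})(x_0)$, $f_i(x_0)$, $D(f_i\phi)(x_0)$ and the $i$-independent quantities $D(\phi^{2})(x_0)$, $D(\phi)(x_0)$. As $f_1(x_0)=f_2(x_0)$ and the two $D$-terms coincide at $x_0$ by the previous remark, I obtain $D(f_1)(x_0)=D(f_2)(x_0)$; since $x_0\in J$ was arbitrary, $D(f_1)\vert_J=D(f_2)\vert_J$.

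The main obstacle is avoiding circularity: a naive bump argument would try to compare $D(f_i)$ directly with $D(f_i\phi)$, but these agree only if one already knows $D$ is local. The substitution $g=h=\phi$ circumvents this, because \eqref{id_2} is a global identity that produces $\phi^{2}D(f_i)$ \emph{simultaneously} with $D(f_i\phi^{2})$ and $D(f_i\phi)$; the factor $\phi^{2}$ on the left becomes $1$ once we evaluate where $\phi=1$, while its two companion terms are genuinely $i$-independent as global functions. Conceptually this works because \eqref{id_2} is equivalent to the statement that, for every fixed $h$, the symmetric defect $B_h(f):=D(fh)-fD(h)-hD(f)$ satisfies the Leibniz rule $B_h(fg)=fB_h(g)+gB_h(f)$, and Leibniz-type operators are localized; the displayed specialization is exactly the quantitative form of this observation that isolates $D(f_i)$ itself.
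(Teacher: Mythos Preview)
Your proof is correct and follows essentially the same route as the paper: the constants $D(\pm\mathbf 1)=0$ come from the same substitutions, and localization is obtained via the same specialization $g=h=\phi$ with a bump $\phi$ supported in $J$ and equal to $1$ at the point under consideration, using that $f_1\phi^j=f_2\phi^j$ globally. Your write-up is in fact slightly cleaner: you conclude by matching the $i$-dependent terms directly (using $f_1(x_0)=f_2(x_0)$), whereas the paper additionally asserts $D(\psi^j)(x)=0$, which is not needed for the comparison.
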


\begin{proof}
 Equation \eqref{id_2} with the substitution $f=g=h= \mathbf{1}$ implies immediately $D(\mathbf{1})=0$. Similarly, substitution $f=g=h = -\mathbf{1}$ leads to  $D(-\mathbf{1})=0$. 

Finally, let $J\subset \Omega$ be an open set. Assume that $\varphi_{1}, \varphi_{2}\in \mathscr{C}^{k}(\Omega)$ are such that $\varphi_{1}\vert_{J}= \varphi_{2}\vert_{J}$. 
Let $x\in J$ be arbitrarily fixed and let us chose a function $\psi\in \mathscr{C}^{k}(\Omega)$ with $\psi(x)=1$ and $\supp \psi\subset J$. Then we have 
\[
 \varphi_{1}\cdot \psi= \varphi_{2}\cdot \psi. 
\]
If we substitute at first 
\[
 f= \varphi_{1} \quad \text{ and } \quad  g=h= \psi 
\]
in equation \eqref{id_2} and secondly 
\[
 f= \varphi_{2} \quad \text{ and } \quad   g=h = \psi, 
\]
then we arrive at
\[
D(\varphi_i \cdot\psi^2) - \varphi_i D(\psi^2) - 2 \psi D(\varphi_i\cdot\psi) + 2 \varphi_i \cdot\psi D(\psi) + \psi^2 D(\varphi_i) = 0
\]
for $i = 1, 2$. Note that $D(\varphi_{1}\cdot \psi^{j})= D(\varphi_{2}\cdot \psi^{j})$  and moreover, equality $D(\mathbf{1})=0$ implies that $D(\psi^j)(x)=0$ for $j\in \{1, 2\}$. Therefore the equality
$D(\varphi_{1})(x)= D(\varphi_{2})(x)$ follows. 
Since $x\in J$ was arbitrary, we obtain finally that $D(\varphi_{1})\vert_{J}= D(\varphi_{2})\vert_{J}$. Thus $D$ is localized on intervals. 
\end{proof}

In view of \cite[Proposition 3.3]{KonMil18}, which is the statement below, any operator that is localized on intervals is pointwise localized. 

\begin{prop}\label{prop_loc_pointwise}
 Let $k$ be a fixed nonnegative integer, $\Omega\subset \mathbb{R}$ be an open set and suppose that the operator $T\colon \mathscr{C}^{k}(\Omega)\to \mathscr{C}(\Omega)$ is localized on intervals, then there exists a function $F\colon \Omega\times \mathbb{R}^{k+1}\to \mathbb{R}$ such that 
 \[
  (Tf)(x)= F(x, f(x), f'(x), \ldots, f^{(k)}(x))
 \]
for all $x\in \Omega$ and for all $f\in \mathscr{C}^{k}(\Omega)$. 
\end{prop}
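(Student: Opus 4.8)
The plan is to prove the statement by showing that $(Tf)(x)$ depends on $f$ only through the $k$-jet $(f(x),f'(x),\ldots,f^{(k)}(x))$ and then to \emph{define} $F$ accordingly. Fix a point $x_{0}\in\Omega$; since $\Omega$ is open, $(x_{0}-\varepsilon,x_{0}+\varepsilon)\subset\Omega$ for some $\varepsilon>0$. For a prescribed jet $(\xi_{0},\ldots,\xi_{k})\in\mathbb{R}^{k+1}$ I set $F(x_{0},\xi_{0},\ldots,\xi_{k}):=(Tp)(x_{0})$, where $p(x)=\sum_{j=0}^{k}\tfrac{\xi_{j}}{j!}(x-x_{0})^{j}$ is the Taylor polynomial realizing that jet at $x_{0}$; clearly $p\in\mathscr{C}^{k}(\Omega)$. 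Once this value is shown to be independent of the representing function, the representation $(Tf)(x_{0})=(Tp_{f})(x_{0})=F(x_{0},f(x_{0}),\ldots,f^{(k)}(x_{0}))$ follows at once by taking $p_{f}$ to be the Taylor polynomial of $f$ at $x_{0}$, since $f$ and $p_{f}$ share the same $k$-jet there.

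The core claim is therefore: if $f_{1},f_{2}\in\mathscr{C}^{k}(\Omega)$ satisfy $f_{1}^{(j)}(x_{0})=f_{2}^{(j)}(x_{0})$ for $j=0,\ldots,k$, then $(Tf_{1})(x_{0})=(Tf_{2})(x_{0})$. To establish it I would glue $f_{1}$ and $f_{2}$ across $x_{0}$: inside $(x_{0}-\varepsilon,x_{0}+\varepsilon)$ let $g$ equal $f_{1}$ for $x\geq x_{0}$ and $f_{2}$ for $x<x_{0}$, and make $g$ a genuine element of $\mathscr{C}^{k}(\Omega)$ by multiplying with a cutoff $\psi\in\mathscr{C}^{k}(\Omega)$ satisfying $\psi\equiv 1$ near $x_{0}$ and $\supp\psi\subset(x_{0}-\varepsilon,x_{0}+\varepsilon)$. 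Because the one-sided derivatives of the glued function at $x_{0}$ up to order $k$ coming from the two sides coincide --- which is exactly the hypothesis $f_{1}^{(j)}(x_{0})=f_{2}^{(j)}(x_{0})$ --- a standard one-variable induction shows $g\in\mathscr{C}^{k}(\Omega)$: at each level $g^{(k-1)}$ is continuous with equal one-sided derivatives at $x_{0}$, hence differentiable there, and $g^{(k)}$ is continuous.

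Finally, since $g$ coincides with $f_{1}$ on an open right half-neighborhood $(x_{0},x_{0}+\delta)$ and with $f_{2}$ on an open left half-neighborhood $(x_{0}-\delta,x_{0})$, the hypothesis that $T$ is localized on intervals yields $(Tg)(x)=(Tf_{1})(x)$ for $x\in(x_{0},x_{0}+\delta)$ and $(Tg)(x)=(Tf_{2})(x)$ for $x\in(x_{0}-\delta,x_{0})$. Using that $Tg,Tf_{1},Tf_{2}\in\mathscr{C}(\Omega)$ and passing to one-sided limits $x\to x_{0}$ gives
\[
(Tf_{1})(x_{0})=\lim_{x\to x_{0}^{+}}(Tg)(x)=(Tg)(x_{0})=\lim_{x\to x_{0}^{-}}(Tg)(x)=(Tf_{2})(x_{0}),
\]
which is the desired equality and completes the well-definedness of $F$.

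I expect the only delicate point to be the verification that the glued function $g$ lies in $\mathscr{C}^{k}(\Omega)$; this is precisely where the matching of all derivatives up to order $k$ is both necessary and sufficient, and where the continuity of the outputs $Tg,Tf_{1},Tf_{2}$ (built into the codomain $\mathscr{C}(\Omega)$) is indispensable for converting the interval-level localization into a pointwise, jet-level conclusion. Everything else is routine bookkeeping: the resulting $F\colon\Omega\times\mathbb{R}^{k+1}\to\mathbb{R}$ is well defined at every $x\in\Omega$, and the representation $(Tf)(x)=F(x,f(x),\ldots,f^{(k)}(x))$ holds throughout $\Omega$.
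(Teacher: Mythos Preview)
The paper does not supply its own proof of this proposition; it is quoted as \cite[Proposition~3.3]{KonMil18} and used as a black box. Your argument is correct and is the standard route to such a statement: glue $f_{1}$ and $f_{2}$ across $x_{0}$ into a single $\mathscr{C}^{k}$ function using that their $k$-jets agree, apply interval-localization on the two open half-neighborhoods $(x_{0}-\delta,x_{0})$ and $(x_{0},x_{0}+\delta)$, and then exploit that $Tg,Tf_{1},Tf_{2}\in\mathscr{C}(\Omega)$ to pass to the limit at $x_{0}$. One small simplification: the cutoff $\psi$ is unnecessary, since $f_{1},f_{2}$ are already defined on all of $\Omega$, so the piecewise function $g(x)=f_{1}(x)$ for $x\geq x_{0}$ and $g(x)=f_{2}(x)$ for $x<x_{0}$ is globally defined and lies in $\mathscr{C}^{k}(\Omega)$ once the jets match.
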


During the proof of our main results, the following consequence of the Fa\`{a} di Bruno formula will be utilized. 

\begin{lem}\label{lem_faa}
 Let $k, l$ be positive integers, $l\leq k$, $\Omega\subset \mathbb{R}$ be a nonempty and open set and $f\in \mathscr{C}^{k}(\Omega)$ be a positive function. Then 
 \begin{multline*}
  \diff{l}{x}\ln \circ f(x)
  \\
  = 
  \sum_{\substack{\sum_{i=1}^{l}i m_{i}=l}}\frac{l!}{m_{1}!\cdots m_{l}!} \cdot 
  \frac{(-1)^{m_{1}+\cdots+m_{l}-1}(m_{1}+\cdots+m_{l}-1)!}{f(x)^{m_{1}+\cdots+ m_{l}}} \cdot \prod_{1\leq j\leq  l} \left(\frac{f^{(j)}(x)}{j!}\right)^{m_{j}} 
  \\ 
  \left(x\in \Omega\right). 
\end{multline*}
\end{lem}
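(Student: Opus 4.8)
The plan is to obtain the formula as a direct specialization of the Fa\`a di Bruno formula. Recall that for a $k$-times continuously differentiable composition $g\circ f$ and a positive integer $l\le k$, the Fa\`a di Bruno formula states that
\[
\diff{l}{x}\,g(f(x))=\sum_{\sum_{i=1}^{l}i m_{i}=l}\frac{l!}{m_{1}!\cdots m_{l}!}\,g^{(m_{1}+\cdots+m_{l})}(f(x))\prod_{1\le j\le l}\left(\frac{f^{(j)}(x)}{j!}\right)^{m_{j}},
\]
the sum being taken over all tuples $(m_{1},\dots,m_{l})$ of nonnegative integers satisfying the displayed constraint. First I would apply this with the outer function $g=\ln$, which is legitimate since $f$ is assumed positive, so that $\ln\circ f$ is well defined and of class $\mathscr{C}^{k}$.

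Next I would compute the derivatives of the outer function. An immediate induction on $m$ gives
\[
\frac{\mathrm{d}^{m}}{\mathrm{d}y^{m}}\ln y=\frac{(-1)^{m-1}(m-1)!}{y^{m}}\qquad(m\ge 1),
\]
the base case $m=1$ being $(\ln y)'=1/y$ and the inductive step following by differentiating once more. Setting $m=m_{1}+\cdots+m_{l}$ and $y=f(x)$ then yields
\[
g^{(m_{1}+\cdots+m_{l})}(f(x))=\frac{(-1)^{m_{1}+\cdots+m_{l}-1}(m_{1}+\cdots+m_{l}-1)!}{f(x)^{\,m_{1}+\cdots+m_{l}}}.
\]

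Before substituting, I would check that this expression is meaningful in every term of the sum, i.e. that the total order $m_{1}+\cdots+m_{l}$ is never zero: this is the one point where a small argument is needed. Since $l$ is a positive integer and the constraint $\sum_{i=1}^{l}i m_{i}=l$ forces at least one $m_{i}$ to be positive, we always have $m_{1}+\cdots+m_{l}\ge 1$; hence the degenerate term involving $g^{(0)}=\ln$ never occurs and the formula for $g^{(m)}$ applies throughout. Substituting the expression for $g^{(m_{1}+\cdots+m_{l})}(f(x))$ into the Fa\`a di Bruno sum and leaving the remaining factors untouched reproduces exactly the claimed identity. I do not expect any genuine obstacle here; the only points deserving care are the verification of the derivative formula for $\ln$ and the observation that the total order is always at least one.
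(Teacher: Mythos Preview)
Your proposal is correct and matches the paper's approach: the paper states the lemma without proof, merely calling it a ``consequence of the Fa\`a di Bruno formula,'' which is exactly what you carry out by specializing to $g=\ln$ and inserting the well-known formula for $(\ln y)^{(m)}$.
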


In the proof of Theorem \ref{thm_main}, we will show that the operator equation \eqref{id_2} can be reduced to a so-called Aichinger type functional equation. To solve the latter, we need the following results from \cite{Alm23}, see also \cite{AihMoo21} and \cite{Alm23b}. However, before presenting them, we need to recall the notion of generalized polynomials from \cite{Sze91}. 

\begin{dfn}
 Let $G, S$ be commutative semigroups, $n\in \mathbb{N}$ and let $A\colon G^{n}\to S$ be a function.
 We say that $A$ is \emph{$n$-additive} if it is a homomorphism of $G$ into $S$ in each variable.
 If $n=1$ or $n=2$ then the function $A$ is simply termed to be \emph{additive}
 or \emph{bi-additive}, respectively.
\end{dfn}

The \emph{diagonalization} or \emph{trace} of an $n$-additive
function $A\colon G^{n}\to S$ is defined as
 \[
  A^{\ast}(x)=A\left(x, \ldots, x\right)
  \qquad
  \left(x\in G\right).
 \]
 
 \begin{dfn}
 Let $G$ and $S$ be commutative semigroups, a function $p\colon G\to S$ is called a \emph{generalized polynomial} from $G$ to $S$ if it has a representation as the sum of diagonalizations of symmetric multi-additive functions from $G$ to $S$. In other words, a function $p\colon G\to S$ is a 
 generalized polynomial if and only if, it has a representation 
 \[
  p= \sum_{k=0}^{n}A^{\ast}_{k}, 
 \]
where $n$ is a nonnegative integer and $A_{k}\colon G^{k}\to S$ is a symmetric, $k$-additive function for each 
$k=0, 1, \ldots, n$. In this case we also say that $p$ is a generalized polynomial \emph{of degree at most $n$}. 

Let $n$ be a nonnegative integer, functions $p_{n}\colon G\to S$ of the form 
\[
 p_{n}= A_{n}^{\ast}, 
\]
where $A_{n}\colon G^{n}\to S$ is symmetric and  $n$-additive function are the so-called \emph{generalized monomials of degree $n$}. 
\end{dfn}

\begin{rem}
 Generalized monomials of degree $0$ are constant functions, generalized monomials of degree $1$ are \emph{additive} functions. 
 Furthermore, generalized monomials of degree $2$ are called \emph{quadratic} functions. 
 \end{rem}

In the following, we rely on several results from \cite{Alm23, Alm23b}, whose statements we reproduce here for completeness.

\begin{lem}\label{lem_aichinger}
Let $S$ be a commutative cancellative semigroup and $H$ be a commutative group. Let $G=S-S$
be a natural extension of $S$. Assume that multiplication by $m!$ is bijective on $H$. Let $f\colon S\to H$ be a solution of 
\[
 f(x_{1}+\cdots+x_{m+1})= \sum_{i=1}^{m+1} g_{i}(x_{1}, \ldots, x_{i-1}, \hat{x_{i}}, x_{i-1}, \ldots, x_{m+1}) 
 \qquad 
 \left(x_{1}, \ldots, x_{m+1}\in S\right)
\]
with certain functions $g_{i}\colon S^{m}\to H$, where for all $i=1, \ldots, m+1$, the symbol $\hat{x_{i}}$ denotes that the function $g_{i}$ does not depend on the variable $x_{i}$. Then there exists a generalized polynomial $F\colon G\to H$ of degree at most $m$, such that $F\vert_{S}=f$ and the functions $g_{i}$ are also generalized polynomials of degree at most $m$. 
\end{lem}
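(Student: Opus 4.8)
The plan is to reduce the Aichinger equation to the vanishing of an iterated difference of $f$ and then feed this into the structure theory of polynomial functions. For $j\in\{1,\dots,m+1\}$ and $y\in S$ let $\Delta^{(j)}_{y}$ denote the partial difference operator acting in the $j$-th slot of a function of $(x_{1},\dots,x_{m+1})$, and let $\Delta_{y}$ denote the ordinary difference $\Delta_{y}f(x)=f(x+y)-f(x)$ of a one-variable function (well defined since $H$ is a group). First I would apply the commuting composition $\Delta^{(1)}_{y_{1}}\cdots\Delta^{(m+1)}_{y_{m+1}}$ to both sides. On the right-hand side each summand $g_{i}$ is independent of $x_{i}$, hence is annihilated by the factor $\Delta^{(i)}_{y_{i}}$, so the whole right-hand side vanishes. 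On the left-hand side, differencing $f(x_{1}+\cdots+x_{m+1})$ in the $j$-th variable coincides with differencing $f$ itself by $y_{j}$, so the composition collapses to a single iterated difference of $f$. This yields
\[
 (\Delta_{y_{1}}\cdots\Delta_{y_{m+1}}f)(x_{1}+\cdots+x_{m+1})=0
 \qquad (x_{i},y_{j}\in S),
\]
i.e.\ the $(m+1)$-st order differences of $f$ vanish along sums in $S$.

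Next I would pass to the group $G=S-S$. Since $S$ is commutative and cancellative, $G$ is its natural group extension and $S$ embeds into it; by translation and cancellation the vanishing of the $(m+1)$-st differences propagates from the base points $x_{1}+\cdots+x_{m+1}$ to all of $G$. At this stage the decisive input is the classical structure theorem for polynomial functions (of Fr\'echet--Djokovi\'c type): a function whose $(m+1)$-st differences vanish identically on a group is a generalized polynomial of degree at most $m$, provided one may divide by $m!$. This is exactly where the hypothesis that multiplication by $m!$ is bijective on $H$ enters, as it supplies, via polarization, the decomposition $F=\sum_{k=0}^{m}A_{k}^{\ast}$ into diagonalizations of symmetric $k$-additive maps. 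This produces the required $F\colon G\to H$ with $F\vert_{S}=f$.

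It then remains to show that the \emph{given} functions $g_{i}$ are generalized polynomials, where the non-uniqueness of the decomposition $\sum_{i}g_{i}$ must be confronted. I would first exhibit one explicit polynomial decomposition: expanding $F(x_{1}+\cdots+x_{m+1})=\sum_{k=0}^{m}A_{k}^{\ast}(x_{1}+\cdots+x_{m+1})$ by multiadditivity produces a sum of terms of the form $A_{k}(x_{j_{1}},\dots,x_{j_{k}})$ with $k\leq m$, each of which omits at least one of the $m+1$ indices; collecting terms according to an omitted index yields generalized polynomials $G_{i}$ with $F(x_{1}+\cdots+x_{m+1})=\sum_{i}G_{i}(x_{1},\dots,\hat{x_{i}},\dots,x_{m+1})$. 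Subtracting this from the hypothesis reduces the task to the homogeneous equation $\sum_{i=1}^{m+1}h_{i}(x_{1},\dots,\hat{x_{i}},\dots,x_{m+1})=0$ for $h_{i}=g_{i}-G_{i}$, whose solutions I would show to be generalized polynomials by the same difference-operator technique — isolating each $h_{i}$ by differencing the remaining variables — together with an induction on $m$.

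The hard part will be the passage from the purely combinatorial difference identity to a genuine generalized-polynomial decomposition over a semigroup: establishing the Fr\'echet--Djokovi\'c structure theorem on $G=S-S$ and tracing precisely how the $m!$-divisibility is what makes polarization succeed. The second delicate point is the homogeneous equation for the $h_{i}$, where non-uniqueness means no single $g_{i}$ is pinned down by the equation, so one must argue that \emph{every} solution, not merely a convenient one, is polynomial. Both points are exactly what the cited results of \cite{Alm23,Alm23b} (cf.\ \cite{AihMoo21}) provide, and the argument ultimately rests on them.
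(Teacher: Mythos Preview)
The paper does not prove this lemma at all: it is quoted verbatim from \cite{Alm23,Alm23b} (with \cite{AihMoo21} in the background), as announced in the sentence preceding the statement, ``In the following, we rely on several results from \cite{Alm23, Alm23b}, whose statements we reproduce here for completeness.'' So there is no in-paper proof to compare against.

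That said, your sketch is a faithful outline of the standard route taken in those references: annihilate each $g_{i}$ by differencing in the omitted variable, obtain $\Delta_{y_{1}}\cdots\Delta_{y_{m+1}}f=0$, invoke the Fr\'echet--Djokovi\'c characterization (with the $m!$-divisibility hypothesis enabling polarization) to get the generalized-polynomial structure of $f$ and its extension $F$ to $G=S-S$, and then handle the $g_{i}$ via an explicit polynomial decomposition plus the homogeneous remainder. You also correctly flag the two genuinely delicate points --- the semigroup-to-group extension step and the non-uniqueness of the $g_{i}$ --- and rightly note that these are precisely what the cited papers supply. Since the paper treats the lemma as a black box, your write-up actually goes further than the paper does; nothing in it conflicts with the cited sources.
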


\begin{cor}\label{cor_aichinger}
 Let $k$ and $m$ be nonnegative integers and $f\colon \mathbb{R}^{k+1}\to \mathbb{R}$  be a function such that 
 \[
 f(x_{1}+\cdots+x_{m+1})= \sum_{i=1}^{m+1} g_{i}(x_{1}, \ldots, x_{i-1}, \hat{x_{i}}, x_{i-1}, \ldots, x_{m+1}) 
 \qquad 
 \left(x_{1}, \ldots, x_{m+1}\in \mathbb{R}^{k+1}\right)
\]
holds with certain functions $g_{i}\colon \mathbb{R}^{m(k+1)}\to \mathbb{R}$. If the function $f$ is continuous at a point $x^{\ast}\in \mathbb{R}^{k+1}$, then $f$ is a $(k+1)$-variable ordinary polynomial of degree at most $m$. 
\end{cor}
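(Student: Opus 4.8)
The plan is to apply Lemma~\ref{lem_aichinger} and then upgrade the resulting generalized polynomial to an ordinary one by exploiting the continuity hypothesis. First I would specialize Lemma~\ref{lem_aichinger} to the case $S=G=\mathbb{R}^{k+1}$ (which is already a group, so that $G=S-S=S$ and the clause $F\vert_{S}=f$ means $F=f$ on all of $\mathbb{R}^{k+1}$) and $H=\mathbb{R}$. Multiplication by $m!$ is a bijection of $\mathbb{R}$, so the hypotheses are satisfied, with the degenerate case $m=0$ forcing $f$ to be constant and thus giving the degree-$0$ base case outright. The lemma then yields that $f$ is a generalized polynomial of degree at most $m$, that is,
\[
 f=\sum_{j=0}^{m}A_{j}^{\ast},
\]
where each $A_{j}\colon (\mathbb{R}^{k+1})^{j}\to\mathbb{R}$ is symmetric and $j$-additive. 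It remains to show that continuity of $f$ at a single point forces this generalized polynomial to be an ordinary polynomial.

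Next I would move the point of continuity to the origin and separate the homogeneous components. Put $\phi(x)=f(x^{\ast}+x)$; a routine multilinear expansion shows that $\phi$ is again a generalized polynomial of degree at most $m$, say $\phi=\sum_{j=0}^{m}B_{j}^{\ast}$, and $\phi$ is continuous at $0$, hence bounded on some ball $B(0,\delta)$. The advantage of centring at $0$ is that the integer dilations used to isolate the homogeneous parts no longer move us out of the region of boundedness. Indeed, since each $B_{j}$ is $j$-additive one has $\phi(ix)=\sum_{j=0}^{m}i^{j}B_{j}^{\ast}(x)$ for every $i\in\{0,1,\dots,m\}$, and inverting the (invertible) Vandermonde system in these $m+1$ values gives
\[
 B_{j}^{\ast}(x)=\sum_{i=0}^{m}v_{ji}\,\phi(ix)
\]
with rational constants $v_{ji}$. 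For $x\in B(0,\delta/m)$ every argument $ix$ remains in $B(0,\delta)$, so the right-hand side is bounded; thus each generalized monomial $B_{j}^{\ast}$ is bounded on a set of positive Lebesgue measure.

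Finally I would invoke the classical regularity theory of generalized monomials: a symmetric $j$-additive form whose diagonalization is bounded on a set of positive measure is in fact $\mathbb{R}$-multilinear, so its diagonalization is an ordinary homogeneous polynomial of degree $j$ in the $k+1$ coordinates. Concretely, polarization expresses $B_{j}$ as a finite combination of values of $B_{j}^{\ast}$, transferring the local boundedness from the diagonal to $B_{j}$ itself, and a multiadditive form bounded on a set of positive measure in each variable is continuous, hence $\mathbb{R}$-multilinear; the base case $j=1$ is the Steinhaus-type theorem that a locally bounded additive function is linear. Summing the components shows that $\phi$, and therefore $f(y)=\phi(y-x^{\ast})$, is a $(k+1)$-variable ordinary polynomial of degree at most $m$. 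The main obstacle is exactly this last regularity upgrade from $\mathbb{Q}$-multiadditivity to $\mathbb{R}$-multilinearity: extracting the boundedness of the individual homogeneous components is elementary once the centre has been shifted to $0$, but passing from a locally bounded additive (or multiadditive) function to a genuinely linear one is the nontrivial step, resting on the measure-theoretic Steinhaus argument together with polarization.
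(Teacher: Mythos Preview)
Your argument is correct and follows the standard route: first Lemma~\ref{lem_aichinger} gives a generalized polynomial, then the translation to $0$, the Vandermonde separation of homogeneous components, and the Steinhaus-type regularity upgrade turn it into an ordinary polynomial. Each step is sound; in particular the polarization/local-boundedness passage from $\mathbb{Q}$-multiadditive to $\mathbb{R}$-multilinear is the classical one (cf.\ Kuczma, Sz\'ekelyhidi).

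There is nothing to compare against, however: the paper does not supply its own proof of this corollary. Both Lemma~\ref{lem_aichinger} and Corollary~\ref{cor_aichinger} are quoted verbatim from Almira~\cite{Alm23, Alm23b} ``for completeness'' and used as black boxes. So you have written out a self-contained proof of a result the paper simply imports from the literature; the approach you took is essentially the one found in those references.
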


\begin{thm}\label{thm_main}
 Let $k$ be a fixed nonnegative integer and $\Omega\subset \mathbb{R}$ be a nonempty and open set.
 Suppose that the operator  $D\colon \mathscr{C}^{k}(\Omega)\to \mathscr{C}(\Omega)$ satisfies \eqref{id_2}
 for all $f, g, h\in \mathscr{C}^{k}(\Omega)$. 
 Then there exist functions $c_{0}, c_{1}, c_{2}, d_{00}\in \mathscr{C}^{k}(\Omega)$ such that 
\begin{equation}
 D(f)(x)= c_{0}(x)f(x)\ln(|f(x)|)+c_{1}(x)f'(x)+c_{2}(x)f''(x)+d_{00}(x)f(x)\left(\ln(|f(x)|)\right)^{2}
\label{D}
\end{equation} 
holds for all $x\in \Omega$ and  $f\in \mathscr{C}^{k}(\Omega)$. 
Further, 
\begin{enumerate}[(i)]
 \item if $k=0$, then $c_{1}=c_{2}=0$
 \item if $k=1$, then $c_{2}=0$. 
\end{enumerate}
Conversely, the operator $D$ given by the formula \eqref{D} satisfies \eqref{id_2}. 
\end{thm}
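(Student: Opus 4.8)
The plan is to reduce the operator equation to a pointwise functional equation, then apply the Aichinger-type machinery.

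First I would feed \eqref{id_2} through Lemma \ref{lem_loc_interval} and Proposition \ref{prop_loc_pointwise} to obtain a representing function $F\colon \Omega\times\mathbb{R}^{k+1}\to\mathbb{R}$ with $D(f)(x)=F(x,f(x),f'(x),\dots,f^{(k)}(x))$, after which $x$ is treated as a fixed parameter. The decisive reduction is to divide \eqref{id_2} by $f\cdot g\cdot h$ for strictly positive $f,g,h$, turning it into
\[
\Phi(fgh)-\Phi(gh)-\Phi(fh)-\Phi(fg)+\Phi(f)+\Phi(g)+\Phi(h)=0,
\]
where $\Phi(f)=D(f)/f$. Writing $f=e^{P}$, $g=e^{Q}$, $h=e^{R}$ and using $\ln(fgh)=P+Q+R$, the jet $\mathrm{jet}_x\ln f=(P(x),\dots,P^{(k)}(x))$ ranges over all of $\mathbb{R}^{k+1}$ as $f$ ranges over positive functions; hence the well-defined map $\Psi\colon\mathbb{R}^{k+1}\to\mathbb{R}$, $\Psi(\mathrm{jet}_x\ln f)=\Phi(f)(x)$, satisfies
\[
\Psi(u+v+w)-\Psi(v+w)-\Psi(u+w)-\Psi(u+v)+\Psi(u)+\Psi(v)+\Psi(w)=0
\]
for all $u,v,w\in\mathbb{R}^{k+1}$. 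This is precisely the Aichinger equation of Lemma \ref{lem_aichinger} with $m=2$, so $\Psi$ is a generalized polynomial of degree at most $2$, and $D(\mathbf 1)=0$ (Lemma \ref{lem_loc_interval}) forces its constant term to vanish.

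Next I would apply Corollary \ref{cor_aichinger} to upgrade $\Psi$ to an ordinary polynomial of degree at most $2$ in the variables $p_j=(\ln f)^{(j)}(x)$. The required continuity of $\Psi$ at a point I would extract from $D(f)\in\mathscr{C}(\Omega)$ by inserting a family of positive test functions (exponentials of polynomials) whose logarithmic jets fill a neighbourhood; establishing this regularity is one of the technical points. We then have
\[
D(f)(x)=f(x)\Bigl(\textstyle\sum_{j}b_j(x)\,p_j+\sum_{i\le j}c_{ij}(x)\,p_ip_j\Bigr).
\]

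The heart of the argument is to re-express each monomial in terms of $f,f',\dots$ and to impose that $D(f)$ be continuous for \emph{every} $f\in\mathscr{C}^{k}(\Omega)$, including functions with zeros. Using Lemma \ref{lem_faa} one computes $f\,p_0=f\ln|f|$, $f\,p_0^2=f(\ln|f|)^2$, $f\,p_1=f'$ and $f\,p_2=f''-(f')^2/f$; each of these (save the isolated $f\,p_2$) is continuous. By contrast every remaining degree-$\le2$ monomial, once multiplied by $f$, produces a genuinely discontinuous expression near a simple zero of $f$ — for instance $f'\ln|f|$ from $p_0p_1$, $(f')^2/f$ from $p_1^2$, or the Faà di Bruno corrections appearing in $f\,p_j$ for $j\ge3$. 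The only admissible survivors are therefore $p_0$, $p_0^2$, $p_1$, and the pair $p_2,p_1^2$ constrained to have equal coefficients, since $f(p_2+p_1^2)=f''$ is the unique continuous combination of those two; this is exactly \eqref{D} with $c_2$ the common coefficient. I expect this monomial elimination — rigorously ruling out every continuous degree-$\le2$ combination that uses $p_j$ with $j\ge3$ or the forbidden products — to be the main obstacle.

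Finally I would pass from positive functions to arbitrary $f\in\mathscr{C}^{k}(\Omega)$ via localization (Lemma \ref{lem_loc_interval}) and continuity: on intervals where $f<0$ one argues through $|f|$, and the values of $D(f)$ at zeros of $f$ are pinned down by continuity. The coefficient functions are recovered as explicit combinations of $D$ evaluated on fixed smooth functions (two distinct positive constants isolate $c_0,d_{00}$; functions $e^{\pm\xi}$ and one with $f''\neq0$ isolate $c_1,c_2$), which yields their regularity. The special cases are immediate from the dimension of the jet space: for $k=0$ only $p_0$ is available, so $c_1=c_2=0$, and for $k=1$ there is no $p_2$, so $c_2=0$. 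The converse is routine, since \eqref{id_2} is linear in $D$ and invariant under multiplication by a fixed function of $x$; it therefore suffices to verify the four building blocks $f\ln|f|$, $f(\ln|f|)^2$, $f'$, $f''$ separately, the first two by the quadratic computation above and the last two by the Leibniz rule and identity \eqref{id_second}.
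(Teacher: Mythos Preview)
Your proposal follows essentially the same route as the paper: localize via Lemma~\ref{lem_loc_interval} and Proposition~\ref{prop_loc_pointwise}, pass to $\Phi(f)=D(f)/f$ on positive functions (the paper writes this equivalently as $P(g)=D(e^{g})/e^{g}$), derive the Aichinger equation for the function of the logarithmic jet, invoke Corollary~\ref{cor_aichinger} to obtain a quadratic polynomial in the $p_j=(\ln f)^{(j)}$, and then eliminate the inadmissible monomials by the requirement that $D(f)$ remain continuous at zeros of $f$, using Lemma~\ref{lem_faa}. You are if anything more explicit than the paper about the continuity input to Corollary~\ref{cor_aichinger} and about the passage to negative and vanishing $f$, but the architecture and the decisive singularity-cancellation computation (forcing $c_2$ to pair with $p_1^2$ and killing all $p_j$, $j\ge 3$, and the cross terms) coincide.
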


\begin{proof}
If $D\colon \mathscr{C}^{k}(\Omega)\to \mathscr{C}(\Omega)$ satisfies equation \eqref{id_2}, then by Lemma \ref{lem_loc_interval} and Proposition \ref{prop_loc_pointwise}, there exists a function $F\colon \Omega\times \mathbb{R}^{k+1}\to \mathbb{R}$ such that 
\[
 D(f)(x)= F(x, f(x), \ldots, f^{(k)}(x))
\]
for all $x\in \Omega$ and for all $f\in \mathscr{C}^{k}(\Omega)$. 

Consider the operator $P\colon \mathscr{C}^{k}(\Omega)\to \mathscr{C}(\Omega)$ defined by 
\[
 P(f)(x)= \frac{D(\exp\circ f)(x)}{\exp\circ f(x)} 
 \qquad 
 \left(x\in \Omega, f\in \mathscr{C}^{k}(\Omega)\right). 
\]
Since $D$ is pointwise localized, so is the operator $P$. Thus there exists a function $G\colon\mathbb{R}^{k+1}\times \Omega \to \mathscr{C}(\Omega)$ such that 
\[
 P(f)(x)= G(x, f(x), \ldots, f^{(k)}(x)) 
 \qquad 
 \left(x\in \Omega, f\in \mathscr{C}^{k}(\Omega)\right). 
\]
Let $f, g, h\in \mathscr{C}^{k}(\Omega)$ be arbitrary, then by \eqref{id_2}
\begin{align*}
 P(f+ g & + h) - P(g+ h) - P(f+ h) - P(f + g) + P(f) + P(g) +P(h)\\&=
 \frac{1}{\exp(f+g + h)}D(\exp(f + g + h)) - \frac{1}{\exp(g + h)}D(\exp(g + h)) \\&- \frac{1}{\exp(f + h)}D(\exp(f + h)) - \frac{1}{\exp(f + g)}D(\exp(f + g)) 
\\&+ \frac{1}{\exp(f)}D(\exp(f)) + \frac{1}{\exp (g) }D(\exp( g)) +\frac{1}{\exp( h)}D(\exp( h))\\&= \frac{1}{\exp(f)\cdot\exp(g)\cdot\exp(h)}\cdot \Big[D(\exp(f)\cdot\exp(g)\cdot\exp(h)) 
- \exp(f)D(\exp(g)\cdot\exp(h)) \\&- \exp(g)D(\exp(f)\cdot\exp(h)) -\exp(h)D(\exp(f)\cdot\exp(g)) \\&+\exp(f)\cdot \exp(g)D(\exp(h)) 
+ \exp(f)\cdot \exp(h)D(\exp(g)) 
+ \exp(g)\cdot \exp(h)D(\exp(f)) \Big]= 0. 
\end{align*}
For any $v_{j}= (v_{j}^{l})_{l=0}^{k}\in \mathbb{R}^{k+1}$, $j=1, 2, 3$ and $x\in \Omega$ we can find smooth functions $g_{1}, g_{2}, g_{3}\in \mathscr{C}^{k}(\Omega)$ such that 
\[
 g_{j}^{(l)}(x)= v_{j}^{l}, \qquad (l=0, 1, \dots , k, \, j=1, 2, 3 ). 
\]
From this and from the previous equality we get that the function $G$ satisfies
\[
 G(x, v_1+v_2+v_3) - G(x, v_2+v_3) - G(x, v_1 +v_3)- G(x, v_1+v_2) + G(x, v_1) + G(x, v_2)+ G(x, v_3) =0
\]
for all $x\in \Omega$ and for all $v_{1}, v_{2}, v_3\in \mathbb{R}^{k+1}$. 
In other words, for every fixed $x\in \Omega$, the mapping 
\[
 \mathbb{R}^{k+1} \ni v \longmapsto G(x, v)
\]
satisfies an Aichinger-type functional equation. 
At the same time, for every fixed $g\in \mathscr{C}^{k}(\Omega)$, the function $P(g)$ is continuous on $\Omega$. Thus for any $g\in \mathscr{C}^{k}(\Omega)$, 
the function $x\longmapsto G(x, g(x), \ldots, g^{(k)}(x))$ is continuous on $\Omega$, as well. Therefore, by Corollary \ref{cor_aichinger},  there exist continuous functions $c\colon \Omega \to \mathbb{R}^{k+1}$, $ d\colon \Omega \to \mathscr{M}_{(k+1)\times (k+1)}(\mathbb{R})$ such that 
\[
 G(x, v)=  \langle c(x) + d(x)\cdot v ,v     \rangle 
 \qquad 
 \left(x\in \Omega, v\in \mathbb{R}^{k+1}\right),
\]
where $\cdot$ denotes the matrix multiplication. 
Let us denote $c = (c_0, \dots , c_{k})$ and $d=(d_{ij})_{i , j = 0}^{k}$. Thus $c_i, d_{ij}\in \mathscr{C}(\Omega)$ for all $i, j=0, \ldots, k$. 
Using these functions, we can write the map $G$ as follows:
\[
G(x, v)= \sum_{i=0}^{k}c_{i}(x)v_{i}+\sum_{i=0}^{k}\sum_{j=0}^{k}d_{i j}(x)v_{i}v_{j} \qquad 
 \left(x\in \Omega, v = (v_0, v_1, \dots , v_k) \in \mathbb{R}^{k+1}\right).
\]
Further, without the loss of generality, we can assume that $d_{ij} = 0$ for $i<j$ (it is enough to replace $d_{ij}$ by $d_{ij}+d_{ji}$ and put $d_{ji}=0$ for $i> j$). So finally we obtain 
\[
G(x, v)= \sum_{i=0}^{k}c_{i}(x)v_{i}+\sum_{i=0}^{k}\sum_{j=i}^{k}d_{i j}(x)v_{i}v_{j} \qquad 
 \left(x\in \Omega, v = (v_0, v_1, \dots , v_k) \in \mathbb{R}^{k+1}\right).
\]
Let now $f\in \mathscr{C}^{k}(\Omega)$ be a \emph{positive} function. Then there exists a function $g\in \mathscr{C}^{k}(\Omega)$ with $f= \exp \circ g$. So we have 
\begin{align*}
 D(f)(x)&= f(x)\cdot P(\ln\circ f)(x) = f(x)\cdot G(x, (\ln\circ f) (x), (\ln \circ f)'(x), \dots , (\ln \circ f)^{(k)}(x) )
 \\
 &= f(x) \cdot \left[\sum_{i=0}^k c_i(x)(\ln \circ f)^{(i)}(x) + \sum_{i=0}^k\sum_{j=i}^k d_{ij}(x)(\ln \circ f)^{(i)}(x)(\ln \circ f)^{(j)}(x) \right].
\end{align*}
If $f\in \mathscr{C}^{k}(\Omega)$ and $f(x)<0$ holds at a point $x\in \Omega$, then there exists an open interval $J\subset \Omega$ containing the point $x$ such that $f\vert_{J}< 0$. Thus, we can find $g\in \mathscr{C}^{k}(\Omega)$ such that $g(x)<0$ for all $x\in \Omega$ and $f\vert_{J}= g\vert_{J}$. Thus Lemma \ref{lem_loc_interval} yields that $D(f)(x)= D(g)(x)$. Therefore, without loss of generality, we can assume that $f(x)<0$ for all $x\in \Omega$. Then $f=-|f|$ holds and if we substitute 
\[
 f= |f| 
 \qquad 
 g=h= -\mathbf{1} 
 \qquad 
\]
in \eqref{id_2}, then due to Lemma \ref{lem_loc_interval}, we obtain $D(-|f|)=-D(|f|)$. Thus we have 
\[
 D(f)(x) = f(x) \cdot \left[\sum_{i=0}^k c_i(x)(\ln |f|)^{(i)}(x) + \sum_{i=0}^k\sum_{j=i}^k d_{ij}(x)(\ln|f|)^{(i)}(x)(\ln|f|)^{(j)}(x) \right]
\]
for all $f\in \mathscr{C}^{k}(\Omega)$ with $f(x)\neq 0\; (x\in \Omega)$.

At the same time, the domain of the operator $D$ is $\mathscr{C}^{k}(\Omega)$ yielding that the above formula should also hold for functions that have zeros in their range, too. Using this, we will show that $c_i=0$ for $i\geq 3$ and $d_{ij}=0$ for $i+j >2$. 

In view of Lemma \ref{lem_faa}, the order is singularity of the function $f\cdot (\ln\circ |f|)^{(i)}$ is $O\left(\frac{1}{f^{i-1}}\right)$. 

Computing the derivative in case $i=2$ in the first sum, 
we get
\[
 c_{2}(x)f(x)\cdot \ln (f(x))''= 
 c_{2}(x)f(x)\cdot \left[\frac{f''(x)}{f(x)}-\frac{(f'(x))^{2}}{f(x)^{2}}\right]
 =
 c_{2}(x)f''(x)-c_{2}(x)\frac{(f'(x))^{2}}{f(x)}. 
\]
The order of singularity $O(1/f)$ of the second term should be cancelled by some other term that appears in the above formula. This is possible only in the case of the second sum above with $i=1$ and $j=1$, i.e. in the case of the term 
\[
 d_{11}(x)f(x)\cdot \ln (f(x))'\cdot \ln (f(x))'= d_{11}(x)\dfrac{(f'(x))^{2}}{f(x)}, 
\]
showing that $c_{2}=-d_{11}$. 
Similarly, 
\begin{align*}
 c_{3}(x)f(x)\cdot \ln (f(x))'''&= \\
 &= c_{3}(x)f(x)\cdot \left[\frac{f'''(x)}{f(x)}-3\frac{f'(x)f''(x)}{f(x)^2}+2\frac{(f'(x))^3}{f(x)^{3}}\right] \\
 &= c_{3}(x)f'''(x)-3c_{3}(x)\frac{f'(x)f''(x)}{f(x)}+2c_{3}(x)\frac{(f'(x))^3}{f(x)^{2}}
\end{align*}
Again the orders of singularity $O(1/f)$ and $O(1/f^2)$, resp. of the second and third terms should be cancelled by some other terms that appear in the above formula. This is possible only in the case of the second sum above with $i=1$ and $j=2$, i.e. in the case of the term 
\[
 d_{12}(x)f(x)\cdot \ln (f(x))''\cdot \ln (f(x))'
 =
 d_{12}(x)\frac{f'(x)f''(x)}{f(x)}-d_{12}(x)\frac{(f'(x))^3}{f(x)^{2}}. 
\]
This shows that we have 
\[
 d_{12}(x)-3c_{3}(x)=0 \qquad \text{and} \qquad  -d_{12}(x)+2c_{3}(x)=0, 
\]
so $c_{3}=0$ and thus also $d_{12}=0$. 
For $i\geq 4$, the term $c_{i}(x)f(x)\cdot \ln (f(x))^{(i)}$ contains a summand of the form $\dfrac{(f'(x))^{2}f^{(i-2)}(x)}{f(x)^2}$, but the second sum in the above formula does contains a summand of this form. From this, we get that necessarily $c_{i}=0$ for $i\geq 4$, as well. 

This means that we have 
\begin{multline*}
 D(f)(x)= c_{0}(x)f(x)\ln(|f(x)|)+c_{1}(x)f'(x)+c_{2}(x)f''(x)
 \\
 +d_{00}(x)f(x)\left(\ln(|f(x)|)\right)^{2}+d_{01}(x)\ln(|f(x)|)f'(x)
\end{multline*}
holds. Since $D(f)\in \mathscr{C}(\Omega)$, then $d_{0, 1}=0$. 

From this, we finally get that 
\[
 D(f)(x)= c_{0}(x)f(x)\ln(|f(x)|)+c_{1}(x)f'(x)+c_{2}(x)f''(x)+d_{00}(x)f(x)\left(\ln(|f(x)|)\right)^{2}
\]
 for all $f\in \mathscr{C}^{k}(\Omega)$ and for all $x\in \Omega$ with some appropriate functions $c_{0}, c_{1}, c_{2}, d_{00}\in \mathscr{C}(\Omega)$. 
\end{proof}

\begin{rem}
 Since $\lim_{x\to 0}x\ln(|x|)=0$ and $\lim_{x\to 0}x\ln(|x|)^{2}=0$, in the above formula we adopt  $0\ln(0)=0$ and $0\ln(0)^2=0$. 
\end{rem}

\begin{rem}
 Even though for $k\geq 3$ we have $\mathscr{C}^{k}(\Omega)\subsetneq\mathscr{C}^{2}(\Omega)$, for $k\geq 3$ there are not more solutions than for $k=2$. This means, that any solution $D\colon \mathscr{C}^{k}(\Omega)\to \mathscr{C}(\Omega)$ naturally extends (by the same formula) to an operator $\bar{D}\colon \mathscr{C}^{2}(\Omega)\to \mathscr{C}(\Omega)$. This also shows that the natural domain for equation \eqref{id_2} is the function space $\mathscr{C}^{2}(\Omega)$. 
\end{rem}

\begin{rem}
 In this remark let us limit ourselves to the case $\Omega=\mathbb{R}$. For $h\in \mathbb{R}$, let us consider the shift operators $\tau_{h}$ defined through 
 \[
  (\tau_{h}f)(x)= f(x+h) 
  \qquad 
  \left(f\in \mathscr{C}^{k}(\mathbb{R}), x\in \mathbb{R}\right). 
 \]
An operator $T\colon \mathscr{C}^{k}(\mathbb{R})\to \mathscr{C}(\mathbb{R})$ is said to be \emph{isotropic} if it commutes with all shift operators, i.e. we have $T\tau_{h}=\tau_{h}T$. 

If, in addition to the conditions of Theorem \ref{thm_main}, the operator $D$ is also isotropic, then the functions $c_{0}, c_{1}, c_{2}, d_{00}$ are constant functions, in other words, in this case we have 
\[
 D(f)(x)= c_{0}f(x)\ln(|f(x)|)+c_{1}f'(x)+c_{2}f''(x)+d_{00}f(x)\left(\ln(|f(x)|)\right)^{2}
\]
 for all $f\in \mathscr{C}^{k}(\Omega)$ and for all $x\in \Omega$ with some constants $c_{0}, c_{1}, c_{2}, d_{00}\in \mathbb{R}$. 
\end{rem}

Observe that in the first part of the proof of Theorem \ref{thm_main}, we restricted ourselves to \emph{positive} (and later to negative) functions only. The fact that the operator $D$ is defined on the entire space $\mathscr{C}^{k}(\Omega)$ was only used in the second part of the proof. Thus, the above proof shows that the operator equation \eqref{id_2} has many more solutions among \emph{positive} functions than those stated in Theorem \ref{thm_main}. The following corollary pertains to this observation. 

\begin{cor}
 Let $k$ be a fixed nonnegative integer and $\Omega\subset \mathbb{R}$ be a nonempty and open set.
 Suppose that the operator  $D\colon \mathscr{C}^{k}(\Omega)\to \mathscr{C}(\Omega)$ satisfies \eqref{id_2}
 for all $f, g, h\in \mathscr{C}^{k}(\Omega)$.  Then there are functions $c_{i}, d_{i, j}\in \mathscr{C}(\Omega)$ for all $i, j=0, \ldots, k$ such that for every \emph{positive} function $f\in \mathscr{C}^{k}\left(\Omega\right)$ and for all $x\in \Omega$ we have 
 \[
 D(f)(x)
 = f(x) \cdot \left[\sum_{i=0}^k c_i(x)(\ln \circ f)^{(i)}(x) + \sum_{i=0}^k\sum_{j=0}^k d_{i, j}(x)(\ln \circ f)^{(i)}(x)(\ln \circ f)^{(j)}(x) \right].
\]
And also conversely, the operator $D$ defined through this formula for positive functions of $\mathscr{C}^{k}(\Omega)$, satisfies operator identity \eqref{id_2} for all positive  $f, g, h\in \mathscr{C}^{k}(\Omega)$. 
\end{cor}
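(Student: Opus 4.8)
The plan is to observe that the forward direction is already contained, almost verbatim, in the first part of the proof of Theorem \ref{thm_main}, and that only the converse requires a genuinely new (though short) verification. For the forward direction I would repeat the opening steps of that proof: by Lemma \ref{lem_loc_interval} and Proposition \ref{prop_loc_pointwise} the operator $D$ is pointwise localized, so $D(f)(x)=F(x,f(x),\ldots,f^{(k)}(x))$; passing to $P(f)(x)=D(\exp\circ f)(x)/\exp\circ f(x)$ and using \eqref{id_2}, I would derive exactly as there that for every fixed $x$ the map $v\mapsto G(x,v)$ representing $P$ satisfies the Aichinger-type identity in three variables. By Corollary \ref{cor_aichinger} together with the continuity of $P(g)$, this forces $G(x,\cdot)$ to be an ordinary polynomial of degree at most two with vanishing constant term, i.e. $G(x,v)=\sum_{i=0}^k c_i(x)v_i+\sum_{i,j=0}^k d_{ij}(x)v_iv_j$. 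Writing a positive $f$ as $\exp\circ(\ln\circ f)$ and substituting $v_i=(\ln\circ f)^{(i)}(x)$ then yields precisely the asserted formula. \emph{Crucially, I would stop here:} the coefficient-killing argument of Theorem \ref{thm_main} (the Fa\`a di Bruno singularity analysis forcing $c_i=0$ for $i\geq 3$ and $d_{ij}=0$ for $i+j>2$) used values of $D$ on functions with zeros, and this is exactly the step we must omit when only positive arguments are admitted.

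For the converse I would run the same correspondence backwards. Given the displayed formula on positive functions, I set $G(x,v)=\sum_{i=0}^k c_i(x)v_i+\sum_{i,j=0}^k d_{ij}(x)v_iv_j$ and $P(\phi)(x)=D(\exp\circ\phi)(x)/\exp\circ\phi(x)$; since $\exp\circ\phi>0$, one has $P(\phi)(x)=G(x,\phi(x),\ldots,\phi^{(k)}(x))$. The key algebraic fact to record is that any polynomial of degree at most two with vanishing constant term satisfies the Aichinger-type identity $G(x,v_1+v_2+v_3)-G(x,v_2+v_3)-G(x,v_1+v_3)-G(x,v_1+v_2)+G(x,v_1)+G(x,v_2)+G(x,v_3)=0$: the linear and quadratic parts each annihilate separately, the quadratic case being the usual polarization $B(u+w,u+w)=B(u,u)+2B(u,w)+B(w,w)$ for the associated symmetric bilinear form. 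Hence $P$ itself satisfies the Aichinger-type equation on all of $\mathscr{C}^k(\Omega)$.

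Finally I would translate this additive identity back into the multiplicative identity \eqref{id_2}. Given positive $f,g,h$, set $\phi=\ln\circ f$, $\psi=\ln\circ g$, $\chi=\ln\circ h$; then $\exp\circ(\phi+\psi+\chi)=fgh$ and similarly for the pairwise sums, so evaluating the Aichinger-type equation for $P$ at $(\phi,\psi,\chi)$ gives $\tfrac{D(fgh)}{fgh}-\tfrac{D(gh)}{gh}-\tfrac{D(fh)}{fh}-\tfrac{D(fg)}{fg}+\tfrac{D(f)}{f}+\tfrac{D(g)}{g}+\tfrac{D(h)}{h}=0$, and multiplying through by $fgh$ reproduces \eqref{id_2}. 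The only points requiring care are bookkeeping rather than genuine obstacles: one must check that products and substituted arguments stay positive (they do, being products of positive functions and composites with $\exp$), so that every application of the defining formula is legitimate and no logarithm of a nonpositive value ever appears. In short, the main conceptual content is recognizing that the positive-function characterization is exactly the intermediate output of the proof of Theorem \ref{thm_main} before the singularity analysis; the remaining verifications are the routine polarization identity and the change of variables $t\mapsto\ln t$.
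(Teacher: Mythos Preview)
Your proposal is correct and follows exactly the route the paper intends: the corollary is stated without a separate proof, immediately after the remark that the first part of the proof of Theorem~\ref{thm_main} treats only positive (and then negative) functions, so the forward direction is literally that intermediate output before the Fa\`a di Bruno singularity analysis. Your write-up of the converse via the Aichinger identity for degree-$\le 2$ polynomials (with vanishing constant term, ensured by $D(\mathbf{1})=0$) and the change of variables $t\mapsto\ln t$ is a clean elaboration of what the paper leaves implicit.
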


As an immediate consequence of Theorem \ref{thm_main},
first we describe the linear solutions of the operator equation \eqref{id_2}, and then those that annihilate polynomials of degree at most one.

\begin{cor}\label{cor_linear}
 Let $k$ be a fixed nonnegative integer and $\Omega\subset \mathbb{R}$ be a nonempty and open set.
 Suppose that the \emph{linear} operator  $D\colon \mathscr{C}^{k}(\Omega)\to \mathscr{C}(\Omega)$ satisfies \eqref{id_2}
 for all $f, g, h\in \mathscr{C}^{k}(\Omega)$. 
 Then there exist functions $c_{1}, c_{2}\in \mathscr{C}^{k}(\Omega)$ such that 
 \begin{equation}
 D(f)(x)= c_{1}(x)f'(x)+c_{2}(x)f''(x)
 \label{D2}
 \end{equation}

holds for all $f\in \mathscr{C}^{k}(\Omega)$. 
Further, 
\begin{enumerate}[(i)]
 \item if $k=0$, then $c_{1}=c_{2}=0$
 \item if $k=1$, then $c_{2}=0$. 
\end{enumerate}

Conversely, the operator $D$ given by the formula \eqref{D2} is linear and satisfies \eqref{id_2}. 
\end{cor}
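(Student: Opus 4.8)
The plan is to deduce this corollary directly from Theorem \ref{thm_main} by exploiting linearity to kill the two nonlinear terms in the general formula \eqref{D}. By Theorem \ref{thm_main}, any solution $D$ of \eqref{id_2} has the form
\[
 D(f)(x)= c_{0}(x)f(x)\ln(|f(x)|)+c_{1}(x)f'(x)+c_{2}(x)f''(x)+d_{00}(x)f(x)\left(\ln(|f(x)|)\right)^{2},
\]
and the two parts $f \mapsto f'$ and $f \mapsto f''$ are already linear. So the whole task reduces to showing that linearity of $D$ forces $c_{0}\equiv 0$ and $d_{00}\equiv 0$, after which the claimed formula \eqref{D2} and the degenerate cases for $k=0,1$ are inherited verbatim from Theorem \ref{thm_main}.

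First I would isolate the offending terms. Since $D$ is linear it is in particular homogeneous, so $D(\lambda f) = \lambda D(f)$ for every real scalar $\lambda$. Fix a point $x_0 \in \Omega$ and a function $f$ with $f(x_0)\neq 0$; apply the displayed formula to both $f$ and $\lambda f$ and compare coefficients of $\lambda$. The derivative terms $c_1 (\lambda f)'$ and $c_2(\lambda f)''$ scale by exactly $\lambda$, so they cancel cleanly against $\lambda$ times their counterparts. What remains, after dividing by $\lambda$, is the homogeneity requirement on the logarithmic terms, which at $x_0$ reads
\[
 c_{0}(x_0)f(x_0)\ln(|\lambda|) + d_{00}(x_0)f(x_0)\left[\left(\ln|\lambda|\right)^{2}+2\ln|\lambda|\ln(|f(x_0)|)\right]=0
\]
for all $\lambda \neq 0$ (here I have used $\ln|\lambda f| = \ln|\lambda| + \ln|f|$ and expanded the square). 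Viewing the left-hand side as a polynomial in the free variable $t=\ln|\lambda|\in\R$, the coefficient of $t^2$ is $d_{00}(x_0)f(x_0)$ and the coefficient of $t$ is $\big(c_0(x_0)+2d_{00}(x_0)\ln(|f(x_0)|)\big)f(x_0)$. Since $f(x_0)\neq 0$ and these must vanish identically in $t$, I conclude $d_{00}(x_0)=0$ and then $c_0(x_0)=0$. As every $x_0\in\Omega$ admits such an $f$, we obtain $c_0\equiv 0$ and $d_{00}\equiv 0$ on all of $\Omega$.

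Substituting $c_0\equiv 0$ and $d_{00}\equiv 0$ back into \eqref{D} yields precisely \eqref{D2}, and the vanishing statements for $k=0$ (namely $c_1=c_2=0$) and $k=1$ (namely $c_2=0$) carry over unchanged from Theorem \ref{thm_main}, since those conclusions were proved there independently of the logarithmic coefficients. For the converse, I would simply remark that $f\mapsto c_1 f' + c_2 f''$ is manifestly additive and $\R$-homogeneous, hence linear, and that it satisfies \eqref{id_2} by the converse direction of Theorem \ref{thm_main} (it is the special case $c_0=d_{00}=0$ of formula \eqref{D}). I do not expect a genuine obstacle here: the only point requiring mild care is guaranteeing, for each fixed $x_0$, the existence of a function $f\in\mathscr{C}^{k}(\Omega)$ with $f(x_0)\neq 0$, which is immediate, and ensuring the homogeneity comparison is performed pointwise at $x_0$ so that the continuous coefficient functions $c_0,d_{00}$ are pinned down value by value.
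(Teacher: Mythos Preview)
Your argument is correct and follows exactly the route the paper intends: the corollary is stated in the paper as an immediate consequence of Theorem~\ref{thm_main}, with no further proof given, and your homogeneity calculation supplies precisely the missing detail showing that linearity forces $c_{0}\equiv 0$ and $d_{00}\equiv 0$.
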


\begin{cor}
 Let $k$ be a fixed nonnegative integer and $\Omega\subset \mathbb{R}$ be a nonempty and open set.
 Suppose that the operator  $D\colon \mathscr{C}^{k}(\Omega)\to \mathscr{C}(\Omega)$ satisfies \eqref{id_2}
 for all $f, g, h\in \mathscr{C}^{k}(\Omega)$ and $D$ annihilates all polynomials of degree at most $1$. 
 Then there exists a function $c_{2}\in \mathscr{C}^{k}(\Omega)$ such that 
 \begin{equation}
 D(f)(x)= c_{2}(x)f''(x)
 \label{D3}
 \end{equation}
holds for all $f\in \mathscr{C}^{k}(\Omega)$. 
Further, $k\in \left\{ 0, 1\right\}$, then $c_{2}=0$. 

Conversely, the operator $D$ given by the formula \eqref{D3} annihilates all polynomials of degree at most $1$ and satisfies \eqref{id_2}. 
\end{cor}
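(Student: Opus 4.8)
The plan is to derive everything from Theorem \ref{thm_main} by testing the annihilation hypothesis against polynomials of degree at most one. By Theorem \ref{thm_main}, any operator $D$ satisfying \eqref{id_2} already has the form \eqref{D}, so it remains only to pin down which of the coefficient functions $c_{0}, c_{1}, c_{2}, d_{00}$ must vanish once we additionally require $D(p)=0$ for every $p$ of the form $p(x)=ax+b$.

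First I would substitute the constant functions into the hypothesis. For $f\equiv b$ with $b\in\mathbb{R}$, $b\neq 0$, we have $f'\equiv f''\equiv 0$, so \eqref{D} collapses to
\[
 D(f)(x)= c_{0}(x)\, b\ln(|b|)+d_{00}(x)\, b(\ln(|b|))^{2}.
\]
Dividing the identity $D(f)=0$ by $b$ and writing $t=\ln(|b|)$, I obtain $c_{0}(x)\, t+d_{00}(x)\, t^{2}=0$ for every $x\in\Omega$; since $t$ ranges over all of $\mathbb{R}$ as $b$ ranges over the positive reals, the two coefficients of this polynomial in $t$ must vanish, giving $c_{0}\equiv 0$ and $d_{00}\equiv 0$.

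Next I would substitute the degree-one polynomial $p(x)=x$, for which $p'\equiv 1$ and $p''\equiv 0$. With $c_{0}$ and $d_{00}$ already eliminated, \eqref{D} reduces to $D(p)(x)=c_{1}(x)$, so the hypothesis $D(p)=0$ forces $c_{1}\equiv 0$. This leaves exactly $D(f)(x)=c_{2}(x)f''(x)$, which is \eqref{D3}. The constraints on $k$ are inherited directly from Theorem \ref{thm_main}: for $k=0$ we have $c_{1}=c_{2}=0$ and for $k=1$ we have $c_{2}=0$, so in both cases $c_{2}\equiv 0$.

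For the converse I would simply observe that \eqref{D3} is the special case $c_{0}=c_{1}=d_{00}=0$ of \eqref{D}, hence it satisfies \eqref{id_2} by the converse part of Theorem \ref{thm_main}; alternatively, factoring $c_{2}(x)$ out of the left-hand side of \eqref{id_2} leaves precisely the combination appearing in identity \eqref{id_second}, which vanishes. That $D$ annihilates every polynomial of degree at most one is immediate, since such functions have vanishing second derivative. I expect no genuine obstacle here: the only point requiring care is the separation of $c_{0}$ from $d_{00}$ in the constant-function step, where one must use that $t$ and $t^{2}$ are linearly independent as functions of the free parameter $t=\ln(|b|)$.
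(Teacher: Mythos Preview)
Your argument is correct and follows the route the paper intends: the corollary is stated there without proof, as an immediate consequence of Theorem~\ref{thm_main}, and you have simply made explicit the evaluations on constants and on the linear polynomial needed to force $c_{0}=c_{1}=d_{00}=0$.
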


\begin{rem}
 In the proof of Theorem \ref{thm_main}, the condition that the range of the operator $D$ is the function space $\mathscr{C}(\Omega)$ first became relevant when we reduced the operator equation \eqref{id_2} to an Aichinger-type functional equation. There was the first point where we used the fact that it is sufficient to determine the continuous solutions of this functional equation. Naturally, the question arises as to what we can say about the solutions of the operator equation \eqref{id_2} whose range is a broader function space. Let $\mathscr{F}(\Omega)$ denote the linear space of all functions $f\colon \Omega\to \mathbb{R}$ and assume that $D\colon \mathscr{C}^{k}(\Omega) \to \mathscr{F}(\Omega)$. 
 In view of Lemma \ref{lem_aichinger}, we can only claim that the function $G$ appearing in the proof of Theorem \ref{thm_main} is a generalized polynomial of degree at most two. 
 Especially, if $A_{1}, A_{2}, A_{3}\colon \mathbb{R}\to \mathbb{R}$ are additive functions, 
 $Q\colon \mathbb{R}\to \mathbb{R}$ is a quadratic function, and $c_{0}, c_{1}, c_{2}, d_{00}\in \mathscr{F}(\Omega)$,  then the operator $D\colon \mathscr{C}^{k}(\Omega)\to \mathscr{F}(\Omega)$ defined by 
 \[
  D(f)(x)= 
  c_{0}(x)A_{0}\left(f(x)\ln(f(x))\right)+c_{1}(x)A_{1}\left(f'(x)\right)+c_{2}(x)A_{2}\left(f''(x)\right)+d_{00}(x)Q\left(f(x)\left(\ln(f(x))\right)\right)
 \]
for all $f\in \mathscr{C}^{k}(\Omega)$ and $x\in \Omega$, satisfies \eqref{id_2}. 
\end{rem}

A natural question is whether one can replace an operator relation containing more than one mapping by a relation with just one map. Similar questions have been discussed recently in 
 \cite{FecGseSwi25} and \cite{FecSwi23} in connection with other equations. It turns out that when we assume the linearity of the operator in question, then such weakening of the remaining assumption is possible. Before we proceed, we will need some auxiliary notions.

If $T\colon \mathscr{C}^{k}(\Omega)\to \mathscr{C}(\Omega)$ is an operator and $h\in \mathscr{C}^{k}(\Omega)$ is a function, then the action of the difference operator $\Delta_{h}$ on $T$ is defined as 
\[
 \Delta_{h}T(f)= T(f+h)-T(f) 
 \qquad 
 \left(f\in \mathscr{C}^{k}(\Omega)\right). 
\]
If $n$ is a positive integer and $h_{1}, \ldots, h_{n}\in \mathscr{C}^{k}(\Omega)$, then the higher order difference $\Delta_{h_{1}, \ldots, h_{n}}$ is defined as 
\[
 \Delta_{h_{1}, h_{2} \ldots, h_{n}}T= \Delta_{h_{1}}\left(\Delta_{h_{2}}\left(\cdots \Delta_{h_{n}}T\right)\right). 
\]

Using the above notions we will deduce a strengthening of Corollary \ref{cor_linear}.

\begin{prop}
 Let $k$ be a fixed nonnegative integer and $\Omega\subset \mathbb{R}$ be a nonempty and open set.
 Suppose that the \emph{linear} operator  $D\colon \mathscr{C}^{k}(\Omega)\to \mathscr{C}(\Omega)$ satisfies 
 \begin{equation}\label{id_single}
  D(f^3) - 3fD(f^2) + 3f^2D(f)=0 
 \qquad 
 \left(f\in \mathscr{C}^{k}(\Omega)\right)
 \end{equation}
Then there exist functions $c_{1}, c_{2}\in \mathscr{C}^{k}(\Omega)$ such that $D$ is of the form \eqref{D2}
for all $f\in \mathscr{C}^{k}(\Omega)$. 
Further, 
\begin{enumerate}[(i)]
 \item if $k=0$, then $c_{1}=c_{2}=0$
 \item if $k=1$, then $c_{2}=0$. 
\end{enumerate}
\end{prop}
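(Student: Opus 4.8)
The plan is to \emph{promote} the single-variable hypothesis \eqref{id_single} to the full three-variable identity \eqref{id_2} by a polarization argument, and then quote Corollary~\ref{cor_linear}. Write $L(f,g,h)$ for the left-hand side of \eqref{id_2} and $N(f)$ for the left-hand side of \eqref{id_single}. The point of departure, already recorded in Section~2, is that substituting $g=h=f$ in $L$ returns exactly $N$; in other words, $N$ is the \emph{diagonal} of the symmetric three-variable expression $L$.

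First I would record the structural consequences of the linearity of $D$. The expression $L(f,g,h)$ is visibly symmetric in its three arguments, and --- crucially --- using that $D$ is additive together with the bilinearity of the pointwise product, one checks termwise that $f\mapsto L(f,g,h)$ is additive for fixed $g,h$, and likewise in each of the other two slots. Hence $L$ is a symmetric, triadditive form whose trace is $N$; in the terminology introduced before Lemma~\ref{lem_aichinger}, $N$ is the diagonalization of a generalized monomial of degree three with values in $\mathscr{C}(\Omega)$. It is here, and only here, that the extra linearity hypothesis (absent from Theorem~\ref{thm_main}) is spent.

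Next I would polarize, using the difference operators introduced just above the statement. Regarding $N$ as an operator on $\mathscr{C}^{k}(\Omega)$ and noting $D(\mathbf{0})=0$ (so $N(\mathbf{0})=0$), the third mixed difference of $N$ evaluated at the zero function is the alternating sum
\[
N(f+g+h)-N(f+g)-N(f+h)-N(g+h)+N(f)+N(g)+N(h)=6\,L(f,g,h).
\]
I would verify this identity by expanding $N(f+g+h)=L(f+g+h,f+g+h,f+g+h)$ via triadditivity and collecting the $27$ resulting terms according to the multiset type of their arguments: the three ``pure'' contributions and the six ``two-equal'' contributions cancel exactly against the lower-order terms on the left, leaving $6\,L(f,g,h)$. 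Since the hypothesis \eqref{id_single} asserts $N\equiv 0$, the left-hand side vanishes identically, whence $6\,L(f,g,h)=0$ and therefore $L(f,g,h)=0$ for all $f,g,h\in\mathscr{C}^{k}(\Omega)$; that is, the linear operator $D$ satisfies \eqref{id_2}.

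Finally, having recovered \eqref{id_2} for the linear operator $D$, I would invoke Corollary~\ref{cor_linear} verbatim to obtain the representation \eqref{D2} together with the stated degeneracies ($c_{1}=c_{2}=0$ when $k=0$, and $c_{2}=0$ when $k=1$). The main obstacle is the bookkeeping in the polarization step: one must confirm that the triadditivity of $L$ genuinely rests on the additivity of $D$, and that the combinatorial cancellation of the pure and two-equal terms is carried out correctly so that precisely the factor $6\,L$ survives. Everything after that is a direct appeal to the already-established Corollary~\ref{cor_linear}.
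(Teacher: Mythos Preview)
Your proposal is correct and follows essentially the same route as the paper: define the symmetric form $L=\mathcal{A}_3$, use the linearity of $D$ to see that $L$ is triadditive, polarize via the third mixed difference of the diagonal $N$ at $0$ to recover $L$ from $N$, and then invoke Corollary~\ref{cor_linear}. Your bookkeeping is in fact tighter than the paper's (you correctly obtain the factor $3!=6$, whereas the paper records $6!$ in the corresponding line).
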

\begin{proof}
 We show that if the linear operator $D\colon \mathscr{C}^{k}(\Omega)\to \mathscr{C}(\Omega)$ satisfies equation \eqref{id_single}, then it also fulfils equation \eqref{id_2} and then Corollary \ref{cor_linear} can be applied to deduce the statement. 
 Let us consider the operator $\mathcal{A}_{3}\colon \mathscr{C}^{k}(\Omega)\times \mathscr{C}^{k}(\Omega) \times \mathscr{C}^{k}(\Omega)\to \mathscr{C}(\Omega)$ defined through 
 \begin{multline*}
  \mathcal{A}_{3}(f, g, h)= 
  D(f\cdot g \cdot h) - fD(g\cdot h) - gD(f\cdot h) - hD(f \cdot g)
  \\
  + f\cdot g D(h) + f\cdot h D(g) +g\cdot h D(f)
  \qquad 
  \left(f, g, h\in \mathscr{C}^{k}(\Omega)\right). 
 \end{multline*}
Due to equation \eqref{id_single} we have 
\[
 \mathcal{A}_{3}(f, f, f)= D(f^3) - 3fD(f^2) + 3f^2D(f)=0
\]
for all $f\in \mathscr{C}^{k}(\Omega)$. 
As $D$ is a linear operator, the operator $\mathcal{A}_{3}$ is linear in each of its variables. Therefore, on one hand, all the differences of the mapping 
\[
 \mathscr{C}^{k}(\Omega)\ni l \longmapsto  \mathcal{A}_{3}(l, l, l)
\]
are identically zero. So especially, we have 
\[
 \Delta_{f, g, h}\mathcal{A}_{3}(0, 0, 0)= 0 
 \qquad 
 \left(f, g, h\in \mathscr{C}^{k}(\Omega)\right). 
\]
On the other hand, since $\mathcal{A}_{3}$ is linear in all of this variables, we have 
\[
 \Delta_{f, g, h}\mathcal{A}_{3}(0, 0, 0)= 6! \cdot \mathcal{A}_{3}(f, g, h)
 \qquad 
 \left(f, g, h\in \mathscr{C}^{k}(\Omega)\right). 
\]
Consequently, 
\[
 \mathcal{A}_{3}(f, g, h)=0
\]
for all $f, g, h\in \mathscr{C}^{k}(\Omega)$. Thus the linear operator $D$ satisfies identity \eqref{id_2} and Corollary \ref{cor_linear} ends the proof. 
\end{proof}

\begin{rem}
 While proving the previous statement, the linearity of the operator $D$ was essential. 
 As identity \eqref{id_single} results from \eqref{id_2} by taking $g=f$ and $h=f$, we obtain that if $c, d\in \mathscr{C}^{k}(\Omega)$, then the operator $R\colon \mathscr{C}^{k}(\Omega)\to \mathscr{C}(\Omega)$ defined via 
 \[
  R(f)(x)= c(x)\cdot f(x)\ln(|f(x)|)+d(x)\cdot f(x)\ln(|f(x)|)^{2} 
  \qquad 
  \left(f\in \mathscr{C}^{k}(\Omega), x\in \Omega\right)
 \]
is a nonlinear operator that solves identity \eqref{id_single}. This shows in particular that identity \eqref{id_single} has solutions among nonlinear operators. At the same time $R$ solves identity \eqref{id_2}, too. 
\end{rem}

In our last example we will address the question whether there is a nonlinear operator that solves identity \eqref{id_single} but does not satisfy identity \eqref{id_2}.

\begin{ex}
Assume that $\ph\colon ]1, +\infty[ \to \mathbb{R}$ is a mapping that satisfies the equation
\begin{equation}
\ph(3x) = 3 \ph(2x)-3\ph(x), \quad (x \in ]1, +\infty[)
\label{3}
\end{equation}
and at the same time it is not a polynomial of order at most $2$. Such mappings do exist and can be chosen to be continuously differentiable, see e.g.~the monograph of Kuczma \cite[Lemma 12.1]{Kuc68}. 
In fact, there exist a subinterval $I_0\subset ]1,+\infty[$ such that every map $\ph_0$ defined on $I_0$ can be extended to a solution of \eqref{3} defined on $]1, +\infty[$. 

Next, define $d\colon ]e, +\infty[ \to \mathbb{R}$ by the formula
\[
d(x) = x \cdot \ph(\ln (x)) \qquad (x \in ]e, +\infty[).
\]
One can check that we have 
\[
d(x^3) = 3x d(x) - 3x^2d(x^2) \qquad (x \in ]e, \infty[).
\]
Define operator $D\colon \mathscr{C}^1(]e, +\infty[)\to \mathscr{C}(]e, +\infty[)$ as 
\[
D(f)(x) = d \circ f(x)= d(f(x))  \qquad \left(f \in \mathscr{C}^1(]e, +\infty[)\right).
\]
A direct calculation shows that $D$ satisfies \eqref{id_single} for all $f \in \mathscr{C}(]e, +\infty[)$. On the other hand, $\ph$ is not a polynomial of order at most $2$. Therefore, there exist some $x^{\ast}, y^{\ast}, z^{\ast} \in ]e, \infty[$ such that
\[
\ph(x^{\ast}+y^{\ast}+z^{\ast}) - \ph(x^{\ast}+y^{\ast})-\ph(x^{\ast}+z^{\ast})-\ph(y^{\ast}+z^{\ast}) + \ph(x^{\ast})+\ph(y^{\ast}) + \ph(z^{\ast})\neq 0.
\]
Consequently, 
\[
d(x^{\ast}y^{\ast}z^{\ast}) - x^{\ast}d(y^{\ast}z^{\ast})-y^{\ast}d(x^{\ast}z^{\ast})-z^{\ast}d(x^{\ast}y^{\ast}) + x^{\ast}y^{\ast}d(z^{\ast})+x^{\ast}z^{\ast}d(y^{\ast}) + y^{\ast}z^{\ast}d(x^{\ast})\neq 0.
\]
Therefore, if we consider the functions 
\[
 f(x)= x^{\ast} \qquad g(x)= y^{\ast} 
 \qquad \text{and} \qquad 
 h(x)=z^{\ast} 
 \qquad 
 \left(x\in ]e, +\infty[\right), 
\]
then we conclude that $D$ does not satisfy \eqref{id_2}. 
\end{ex}

\begin{ackn}
The research of E.~Gselmann was supported by the János Bolyai Research Scholarship of the Hungarian Academy of Sciences.
\end{ackn}


%

\end{document}